\DeclareMathOperator{\image}{''}
\DeclareMathOperator{\dom}{dom}
\DeclareMathOperator{\range}{range}
\DeclareMathOperator{\supp}{supp}
\DeclareMathOperator{\crit}{crit}
\DeclareMathOperator{\cf}{cf}
\DeclareMathOperator{\Add}{Add}
\DeclareMathOperator{\Hull}{Hull}
\DeclareMathOperator{\LSCP}{LSCP}
\newcommand{\GCH}{\mathrm{GCH}}
\newcommand{\ZFC}{\mathrm{ZFC}}
\newtheorem{theorem}{Theorem}
\newaliascnt{example}{theorem}
\newaliascnt{fact}{theorem}
\newaliascnt{corollary}{theorem}
\newtheorem{corollary}[corollary]{Corollary}
\newaliascnt{remark}{theorem}
\newtheorem{remark}[remark]{Remark}
\newaliascnt{lemma}{theorem}
\newtheorem{lemma}[lemma]{Lemma}
\newaliascnt{claim}{theorem}
\newtheorem{claim}[lemma]{Claim}
\theoremstyle{definition}
\newaliascnt{definition}{theorem}
\newtheorem{definition}[definition]{Definition}
\newtheorem{question}{Question}
\newaliascnt{notation}{theorem}
\newtheorem{notation}[notation]{Notation}
\begin{document}
\title{Subcompact cardinals, type omission and ladder systems}
\author{Yair Hayut}
\address[Yair Hayut]{
Einstein Institute of Mathematics,
Edmond J. Safra Campus,
The Hebrew University of Jerusalem,
Givat Ram. Jerusalem, 9190401, Israel
}
\email[Yair Hayut]{yair.hayut@mail.huji.ac.at}
\thanks{The first author research was partially supported by the FWF Lise Meitner grant, 2650-N35}

\author{Menachem Magidor}
\address[Menachem Magidor]{
Einstein Institute of Mathematics,
Edmond J. Safra Campus,
The Hebrew University of Jerusalem,
Givat Ram. Jerusalem, 9190401, Israel
}
\email[Menachem Magidor]{mensara@savion.huji.ac.il}
\begin{abstract}
We provide a model theoretical and tree property-like characterization of $\lambda$-$\Pi^1_1$-subcompactness and supercompactness. We explore the behavior of these combinatorial principles at accessible cardinals.
\end{abstract}
\maketitle
\section{Introduction}
The study of very large cardinals and their connections to reflection
principles in infinitary combinatorics is a fruitful area of research that began
with the work of Erd\H{o}s, Tarski, Keisler, Scott and others \cite{KanamoriMagidor78}. Since Scott's work on measurable cardinals \cite{Scott}, large cardinal axioms are usually defined in terms of the existence of certain elementary embeddings between transitive models (see \cite{Gitman,Kanamori}). 
The study of elementary embeddings brings to light relationships between various large cardinals which are usually more difficult to derive using purely combinatorial arguments.
In this paper we will focus on the $\lambda$-$\Pi^1_1$-subcompact cardinals, which were isolated by Neeman and Steel in \cite{NeemanSteelSubcompact}. These cardinals can be viewed intuitively as a generalization of weak compactness to successor cardinals, or more precisely to $P_\kappa \kappa^+$. See \cite{Cody2020} for the analogous definition of the weakly compact filter and \cite{SchimmerlingZemanSquares} for thredability. In \cite{NeemanSteelSubcompact}, the terminology $\Pi^2_1$-subcompact is used to refer to what we denote by $\kappa^+$-$\Pi^1_1$-subcompact. 

We will provide two characterizations of $\lambda$-$\Pi^1_1$-subcompactness. The first one, which is discussed in Section \ref{section: type omission}, is model theoretical in nature and uses a mixture of compactness and type omission. This characterization is an strengthening of Benda's theorem from \cite{Benda78}. We modify Benda's original argument in order to remove the need of using infinitary logic and getting local equivalence. In \cite{Boney20}, Boney obtained a characterization of measurable, supercompact and huge cardinals using similar ideas --- a combination of compactness and type omission. 
The second characterization, discussed in Section \ref{section: ladder systems} is purely combinatorial, and can be viewed as a strengthening of a local instance of the strong tree property, together with inaccessibility, thus continuing the results of \cite{Jech1972, Magidor1974, Weiss} and others. 

The paper is organized as follows. In Section \ref{section: partial strong compactness} we review some facts about strong compactness and $\lambda$-$\Pi^1_1$-subcomapctness. In Sections \ref{section: type omission} and \ref{section: ladder systems} we provide characterizations of $\lambda$-$\Pi^1_1$-subcompactness. In Section \ref{section: aleph2}, we investigate the analog of the combinatorial principles that were defined in Section \ref{section: ladder systems} for $\aleph_2$, and show that the equivalence that holds at inaccessible cardinals consistently fails at $\aleph_2$.
\section{Strong compactness and \texorpdfstring{$\Pi^1_1$}{pi11}-subcompactness}\label{section: partial strong compactness}
In this section we will address some basic results regarding strongly compact and supercompact cardinals, as well as their local versions. The results in this section are not due to us, and are scattered through the literature. 

Keisler and Tarski \cite{KeislerTarski1963} defined a cardinal $\kappa$ to be strongly compact cardinal if every $\kappa$-complete filter can be extended to a $\kappa$-complete ultrafilter or equivalently if every $\mathcal{L}_{\kappa,\kappa}$-theory $T$ has a model provided that every subset $T' \subseteq T$ of size $<\kappa$ has a model. 

\begin{definition}
We say that a theory $T$ is \emph{$<\kappa$-satisfiable} if every subset of $T$ of size $<\kappa$ has a model. We say that a theory is \emph{satisfiable} if it has a model.
\end{definition}
Restricting the size of $T$ we obtain (consistently) a non-trivial hierarchy. 
\begin{definition}
Let $\kappa \leq \lambda$ be cardinals. We say that \emph{$\mathcal{L}_{\kappa,\kappa}$-compactness for languages of size $\lambda$} holds if every theory $T$ over a language of size $\lambda$ which is $<\kappa$-satisfiable is satisfiable.
\end{definition}
Localizing the equivalence for strong compactness, we obtain:
\begin{lemma}\label{lemma: localized strong compactness}
The following are equivalent for uncountable cardinals $\kappa \leq \lambda = \lambda^{<\kappa}$:
\begin{itemize}
\item $\mathcal{L}_{\kappa,\kappa}$-compactness for languages of size $\lambda$ holds.
\item For every transitive model $M$ of size $\lambda$ which is closed under $<\kappa$-sequences there are a transitive model $N$ with ${}^{<\kappa}N\subseteq N$,
an elementary embedding $j\colon M \to N$ with critical point $\kappa$, and 
an element $s\in N$ such that $j\image M \subseteq s$ and $|s|^N < j(\kappa)$.
\end{itemize}
\end{lemma}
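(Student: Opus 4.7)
The plan is to prove each direction by translating between $\mathcal{L}_{\kappa,\kappa}$-theories and elementary embeddings with small covers.

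\textbf{Forward direction.} Assume $\mathcal{L}_{\kappa,\kappa}$-compactness for languages of size $\lambda$, and let $M$ be transitive of size $\lambda$ with ${}^{<\kappa}M\subseteq M$. I build a $\mathcal{L}_{\kappa,\kappa}$-theory $T$ in the language containing a constant $\bar x$ for each $x\in M$ plus one fresh constant $s$. The axioms are: (a) the full elementary diagram of $(M,\in,(x)_{x\in M})$; (b) the atomic formulas $\bar x\in s$ for every $x\in M$; (c) the first-order sentence $|s|<\bar\kappa$; and (d) for each $\alpha<\kappa$, the $\mathcal{L}_{\kappa,\kappa}$-axiom $\forall y\,(y\in\bar\alpha\leftrightarrow\bigvee_{\beta<\alpha}y=\bar\beta)$, which pins $\bar\alpha$ down to the ordinal $\alpha$. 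The theory has size $\lambda=\lambda^{<\kappa}$. Any fragment of $T$ of size $<\kappa$ mentions only constants $\bar x$ for $x$ in some $X\subseteq M$ with $|X|<\kappa$; since ${}^{<\kappa}M\subseteq M$, the set $X$ belongs to $M$, and interpreting $\bar x$ as $x$ and $s$ as $X$ inside $M$ satisfies the fragment. Compactness then yields $N^*\models T$; set $j(x):=\bar x^{N^*}$. Axioms (a) give elementarity; (d) forces $j(\alpha)=\alpha$ for $\alpha<\kappa$; (b) and (c) provide the covering set $s^{N^*}$ with $|s^{N^*}|^{N^*}<j(\kappa)$; and since $s^{N^*}$ contains the $\lambda\ge\kappa$ distinct values $\bar x^{N^*}$, its $N^*$-cardinality is at least $\kappa$, forcing $j(\kappa)>\kappa$ and $\crit j=\kappa$. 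Collapsing the well-founded part of $N^*$ produces a candidate for $N$.

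\textbf{Reverse direction.} Assume the embedding condition, and let $T$ be a $<\kappa$-satisfiable $\mathcal{L}_{\kappa,\kappa}$-theory over a language of size $\lambda$. Pick a transitive $M$ of size $\lambda$, closed under $<\kappa$-sequences, with $T$ and its language as elements of $M$. Apply the hypothesis to get $j\colon M\to N$ and $s\in N$ with $j\image M\subseteq s$ and $|s|^N<j(\kappa)$. By elementarity $N$ satisfies that $j(T)$ is $<j(\kappa)$-satisfiable; the subset $T^*:=j(T)\cap s\in N$ has $N$-cardinality at most $|s|^N<j(\kappa)$, so $T^*$ admits a model $\mathcal{A}\in N$. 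Since $T\subseteq M$, the image $j\image T$ lies in both $j(T)$ and $s$, hence in $T^*$; thus $\mathcal{A}$ satisfies $j(\phi)$ for every $\phi\in T$. Re-reading $\mathcal{A}$ in $V$ by interpreting each non-logical symbol $\sigma$ of the original language of $T$ as $j(\sigma)^{\mathcal{A}}$ yields a $V$-model of $T$.

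\textbf{Main obstacle.} The principal difficulty is ensuring both transitivity of $N$ and the closure ${}^{<\kappa}N\subseteq N$ in the forward direction: the Mostowski collapse of the well-founded part of $N^*$ need not be $<\kappa$-closed in $V$, and it is not even clear a priori that the parts of $N^*$ corresponding to $j\image M$ lie in the well-founded part. My proposed fix is to use $N^*$ only auxiliarily: read off from $s^{N^*}$ a $\kappa$-complete fine ultrafilter $U$ on $P_\kappa M$ in $V$---for each $X\subseteq P_\kappa M$ with $X\in M$ declare $X\in U$ iff $N^*\models s\in\bar X$---and take $N$ to be the transitive collapse of $\mathrm{Ult}(M,U)$ with $j$ the induced ultrapower map. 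Standard ultrapower arguments then make $N$ well-founded, transitive, and $<\kappa$-closed, with $[\mathrm{id}]_U$ serving as the required covering set. A much milder reverse-direction technicality is that $j(\phi)$ is syntactically distinct from $\phi$ by the canonical relabelling of its non-logical symbols, but each $\mathcal{L}_{\kappa,\kappa}$-formula is coded by a set of size $<\kappa$ whose skeleton is preserved by $j$, which is exactly what makes the re-reading of $\mathcal{A}$ through $j$ legitimate.
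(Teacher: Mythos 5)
The paper itself does not give a proof of this lemma---it only cites \cite{Hayut2019}---so there is nothing in-paper to compare against line by line; but your argument is essentially the standard one, and it parallels the in-paper proofs of Lemma \ref{lemma: subcompact using elementary embedding} and Theorem \ref{theorem: type omission and subcompactness}: a diagram theory with a covering constant $s$, $<\kappa$-satisfiability witnessed inside $M$ via ${}^{<\kappa}M\subseteq M$, and, conversely, cutting $j(T)$ down to the $<j(\kappa)$-sized subtheory $j(T)\cap s\supseteq j\image T$. You also correctly isolate the one genuinely delicate point: the raw model $N^*$ gives neither transitivity nor $<\kappa$-closure, and your fix---extract a fine $\kappa$-complete $M$-ultrafilter from $s^{N^*}$ and take the ultrapower---is the right one. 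Note that the $\sigma$- and $\kappa$-completeness of $U$ (hence well-foundedness of the ultrapower, \L{}o\'{s}, and the $<\kappa$-closure of $N$) all flow from your axiom scheme (d), which pins $\bar\alpha$ down for $\alpha<\kappa$ and is the only genuinely infinitary ingredient of $T$; it is essential and you were right to include it.

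Two places need tightening. In the forward direction, the index set of the ultrafilter must be an \emph{element} of the model being ultrapowered: $P_\kappa M$ is a subset of $M$ of size $\lambda$, so neither it nor any function with domain $P_\kappa M$ lies in $M$, and $\mathrm{Ult}(M,U)$ as written has no representing functions; also $T$ needs an axiom placing $s$ inside the ($j$-image of the) index set, since axioms (b) and (c) alone do not guarantee that $U$ contains the whole space. The standard repair is to pass to a transitive $M'$ of size $\lambda$, closed under $<\kappa$-sequences and modelling enough $\mathrm{ZFC}^{-}$, containing $M$, $P_\kappa M$ and a bijection of $M$ with $\lambda$ as elements, form $j'\colon M'\to N'=\mathrm{Ult}(M',U)$ with $U$ a fine $\kappa$-complete $M'$-ultrafilter on $P_\kappa M$, and take $N=j'(M)$ and $s=[\mathrm{id}]$. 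In the reverse direction, two hypotheses are used silently: $M$ must actually \emph{believe} that $T$ is $<\kappa$-satisfiable, which requires choosing $M$ as (the collapse of) an elementary submodel of some $H(\chi)$ containing witnessing models for all small fragments, not an arbitrary transitive set with $T\in M$; and the transfer from ``$N\models(\mathcal{A}\models j(\varphi))$'' to ``$\mathcal{A}\models j(\varphi)$ in $V$'' is exactly where ${}^{<\kappa}N\subseteq N$ is needed, because for infinitary $\varphi$ the clause $\forall(x_i)_{i<\mu}$ is evaluated in $N$ only over those $\mu$-sequences from $\mathcal{A}$ that lie in $N$, and closure is what makes these all of them. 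Neither issue changes the architecture; both are standard to fix.
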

See \cite{Hayut2019} or \cite{BuhagiarDzamonja} for a proof of this lemma. In the second clause, since $|s|^N \geq |s|^V \geq \lambda$, we get that $j(\kappa) > \lambda$. 

The elementary embeddings which are obtained from strong compactness have surprisingly weak implications in terms of reflecting properties. For example, the least measurable cardinal can be strongly compact. 

In order to obtain a stronger reflection we have to assume some form of \emph{normality}. 

\begin{definition}[Reinhardt, Solovay]
A cardinal $\kappa$ is $\lambda$-supercomapct if there is a fine and normal measure on $P_\kappa \lambda$. 
\end{definition} 

Equivalently, $\kappa$ is $\lambda$-supercompact if there is an elementary embedding $j\colon V\to M$, with critical point $\kappa$, such that $M$ is transitive, $\lambda < j(\kappa)$ and $j\image \lambda \in M$.

In contrast to the situation with strong compactness, if $\kappa$ is $2^{\kappa}$-supercompact, then there are many measurable cardinals below it. Let us take a closer look at this case. 
\begin{definition}[Jensen]
A cardinal $\kappa$ is $\lambda$-subcompact if for every $A \subseteq H(\lambda)$, there are $\bar\kappa, \bar\lambda$ and $\bar{A} \subseteq H(\bar{\lambda})$, and an elementary embedding:
\[j \colon \langle H(\bar{\lambda}),\in,\bar{A}\rangle \to \langle H(\lambda), \in A\rangle,\]
with $\crit j = \bar\kappa$, $j(\bar\kappa) = \kappa$. 
\end{definition}

A cardinal $\kappa$ is \emph{subcompact} if it is $\kappa^+$-subcompact. In the context of $\GCH$, one can easily verify that if $\kappa$ is $\kappa^+$-supercompact, then there are many subcompact cardinals below it. In the inner model context, a cardinal $\kappa$ is subcompact in an extender model $L[E]$, if and only if there are stationarily many $\alpha < \kappa^+$, such that $E_\alpha \neq \emptyset$ (using the Jensen-Friedman indexing) see \cite{SchimmerlingZemanSquares}. Jensen's definition follows a result of the second author \cite{Magidor1974}, in which he proved that a cardinal $\kappa$ is supercompact, if and only if it is $\lambda$-subcompact for all $\lambda \geq \kappa$.

The least subcompact cardinal is not measurable. Indeed, similarly to Woodin cardinals, subcompactness of $\kappa$ does not provide an elementary emebdding with critical point $\kappa$, but rather just many elementary embeddings that reach up to $\kappa$. 

By requiring the subcompact embeddings to resemble more the embeddings which are obtained from a supercompactness hypothesis we obtain a non-trivial hierarchy of strengthening of subcompactness.  Informally, we expect to get a parallel hierarchy of large cardinal axioms which accumulates to strong compactness. We expect that during the steps of those hierarchies, one would be able to move from the strong compactness side to the supercompactness side by adding a restriction. Borrowing from the characterizations using measures on $P_\kappa \lambda$, we call those missing ingredient \emph{normality assumptions}. 

So, we would like to isolate a normality property for the elementary embeddings which are obtained from partial strong compactness, as in Lemma \ref{lemma: localized strong compactness}. We will argue ahead that a natural candidate for the normalized partial strong compactness is:
\begin{definition}\label{definition:pi11-subcompact}
Let $\kappa \leq \lambda$ be cardinals.
$\kappa$ is \emph{$\lambda$-$\Pi^1_1$-subcompact} if for every $A \subseteq H(\lambda)$ and every $\Pi^1_1$-statement $\Phi$ such that $\langle H(\lambda), \in, A\rangle \models \Phi$, there are:
\begin{enumerate}
\item a pair of cardinals $\bar\kappa \leq \bar \lambda < \kappa$, 
\item a subset $\bar{A} \subseteq H(\bar{\lambda})$ such that $\langle H(\bar\lambda), \in \bar{A}\rangle \models \Phi$ and 
\item an elementary embedding:
\[j \colon \langle H(\bar\lambda), \in \bar{A}\rangle \to \langle H(\lambda), \in, A\rangle,\]
with critical point $\bar{\kappa}$ and $j(\bar{\kappa}) = \kappa$. 
\end{enumerate}
\end{definition}
A cardinal $\kappa$ is $\kappa$-$\Pi^1_1$-subcompact if and only if $\kappa$ is weakly compact. Cardinals $\kappa$ which are $\kappa^{+}$-$\Pi^1_1$-subcompact are called \emph{$\Pi^2_1$-subcompact cardinals} in \cite{NeemanSteelSubcompact}.

The next lemma characterizes $\lambda$-$\Pi^1_1$-subcompact cardinals in term of elementary embeddings with a fixed critical point. First, let us consider a definition due to Schanker \cite{Schanker2013}.
\begin{definition}[Schanker]
A cardinal $\kappa$ is \emph{$\theta$-nearly supercompact} if for all $A \subseteq \theta$, there is a transitive model $M$ of $\ZFC^-$, such that $A, \theta, \kappa \in M$, $M^{<\kappa} \subseteq M$ and there is an elementary embedding $j \colon M \to N$, $N$ is transitive, $\crit j = \kappa$, and $j\image \theta \in N$, and $\theta < j(\kappa)$.
\end{definition}
Schanker was interested in the case in which $\theta$ is small (relative to $2^\kappa$). In this case, he proved that a $\theta$-nearly supercompact cardinal might not be even measurable (see also \cite{BGHS2015}). 
The following lemma is implicit in \cite{Cody2020}. For the completeness of this paper, we provide a proof.

\begin{lemma}\label{lemma: subcompact using elementary embedding}
The following are equivalent for $\kappa < \lambda$ regular:
\begin{enumerate}
\item $\kappa$ is $\lambda$-$\Pi_1^1$-subcompact.
\item For every transitive model $M$ of size $|H(\lambda)|$, such that ${}^{<\kappa}M\subseteq M$ there is a transitive model $N$, ${}^{<\kappa}N\subseteq N$ and an elementary embedding $j\colon M \to N$ with critical point $\kappa$ such that $j\image M \in N$ and $\lambda < j(\kappa)$.
\item $\kappa$ is $|H(\lambda)|$-nearly supercompact.
\end{enumerate}
\end{lemma}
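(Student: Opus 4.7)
The plan is to prove the two directions separately. For the direction from the embedding property to $\lambda$-$\Pi^1_1$-subcompactness, a reflection-via-elementarity argument suffices. Fix $A \subseteq H(\lambda)$ and a $\Pi^1_1$-statement $\Phi$ holding in $\langle H(\lambda), \in, A\rangle$, and choose a transitive $M$ of size $|H(\lambda)|$, closed under $<\kappa$-sequences, containing $H(\lambda)$, $A$, and sufficient auxiliary data (for example, an elementary submodel of $H(\mu)$ for $\mu$ sufficiently large, so that the truth of $\Pi^1_1$-statements about subsets of $H(\lambda)$ as computed in $M$ agrees with $V$). Applying the embedding hypothesis to $M$ produces $j \colon M \to N$ with $\crit j = \kappa$ and $j\image M \in N$. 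Since $j\image H(\lambda) \in N$, its Mostowski collapse in $N$ yields a transitive structure $\bar H$ isomorphic to $\langle H(\lambda)^V, \in, A\rangle$; the uncollapsing map, which coincides with $j \restriction H(\lambda)$, embeds $\bar H$ elementarily into $\langle H(j(\lambda))^N, \in, j(A)\rangle$ with critical point $\kappa$ and $\kappa \mapsto j(\kappa)$. The statement $\Phi$ transfers across this isomorphism, so $N$ satisfies the assertion that $\langle H(j(\lambda)), \in, j(A)\rangle$ admits a $\Pi^1_1$-reflection at some $\bar\kappa < j(\kappa)$. Pulling this back to $M$ by elementarity of $j$, and then to $V$ by the richness of $M$, yields the required subcompactness reflection.

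For the converse direction, given $M$ I would build the embedding as a type-omitting model of an $\mathcal{L}_{\kappa,\kappa}$-theory. Let $T$ be the theory in the language with constants $\{c_a : a \in M\} \cup \{d, s\}$ consisting of: the elementary diagram of $M$; the axioms $\forall x (x \in c_\alpha \leftrightarrow \bigvee_{\beta < \alpha} x = c_\beta)$ for each $\alpha < \kappa$, which pin the ordinals below $\kappa$; the axioms $d \in c_\kappa$ and $d \neq c_\alpha$ for all $\alpha < \kappa$, which force the eventual critical point to be exactly $\kappa$; and $c_a \in s$ for each $a \in M$. A model of $T$ that omits the type $p(x) := \{x \in s\} \cup \{x \neq c_a : a \in M\}$ will interpret $s$ as precisely $\{c_a^N : a \in M\} = j\image M$, so that after Mostowski collapse $j\image M$ belongs to $N$.

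The main obstacle is to produce such a type-omitting model, and this is exactly the content of the localized Benda-style compactness-with-omission theorem derived from $\lambda$-$\Pi^1_1$-subcompactness in Section~\ref{section: type omission}: every $\mathcal{L}_{\kappa,\kappa}$-theory of size at most $|H(\lambda)|$ that is $<\kappa$-satisfiable together with an appropriate local omission of its types admits a model omitting the type. The $<\kappa$-satisfiability of $T$ with local sub-types of $p$ is routine: any fragment of size $<\kappa$ involves only $<\kappa$-many of the constants $c_a$, and is realized in $M$ itself by interpreting each $c_a$ as $a$, choosing $d$ as an ordinal below $\kappa$ outside the finitely-mentioned set, and interpreting $s$ as exactly the set of constants appearing in the fragment. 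The Mostowski collapse of the resulting model then supplies the desired $j \colon M \to N$; the $<\kappa$-closure of $N$ in $V$ can be secured by adjoining further $\mathcal{L}_{\kappa,\kappa}$-axioms that name $<\kappa$-sequences from $M$, which suffices since $\crit j = \kappa$.
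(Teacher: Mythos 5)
Your first direction (embedding property $\Rightarrow$ $\lambda$-$\Pi^1_1$-subcompactness) is essentially the paper's argument: collapse $j\image H(\lambda)$ inside $N$, use downward absoluteness of $\Pi^1_1$-statements to see that $N$ believes $\Phi$ holds in $\langle H(\lambda),\in,A\rangle$, observe that $k=j\restriction H(\lambda)$ witnesses reflection inside $N$, and pull back by elementarity. That part is fine.

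The converse direction as you have written it is circular. You propose to obtain the type-omitting model from ``the localized Benda-style compactness-with-omission theorem derived from $\lambda$-$\Pi^1_1$-subcompactness in Section~\ref{section: type omission}.'' But in the paper that theorem (Theorem \ref{theorem: type omission and subcompactness}) takes as its hypothesis precisely the embedding property you are trying to establish --- its item (1) is the second bullet of the present lemma --- and the corollary linking type omission to $\lambda$-$\Pi^1_1$-subcompactness is obtained only by composing Theorem \ref{theorem: type omission and subcompactness} with this lemma. So you are assuming the hard implication in order to prove it. To make your route work you would have to derive compactness-with-type-omission \emph{directly} from the $\Pi^1_1$-reflection definition, which you do not do. The paper avoids this entirely with a direct argument: assume the embedding property fails for some $M$; this failure is a $\Pi^1_1$-statement over $H(\lambda)$ (quantifying over all candidate models $N$ of size $|H(\lambda)|$ and candidate embeddings, coded as second-order objects); reflecting it yields $\bar\kappa,\bar\lambda$ and an elementary $j\colon\bar M\to M$, and then the $<\kappa$-closure of $M$ itself hands you $j\image\bar M\in M$ and a suitable $<\bar\kappa$-closed hull, contradicting the reflected statement. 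A secondary problem with your construction: even granting a type-omitting model, your claim that ${}^{<\kappa}N\subseteq N$ ``can be secured by adjoining further $\mathcal{L}_{\kappa,\kappa}$-axioms that name $<\kappa$-sequences from $M$'' does not work as stated, since such axioms only control sequences from $j\image M$, whereas closure of $N$ requires catching $<\kappa$-sequences of \emph{arbitrary} elements of $N$, most of which are not named by any constant. (You also omit the well-foundedness axiom needed before taking the Mostowski collapse.)
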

\begin{proof}
$(2) \implies (3)$ is clear, as the witnessing models for nearly supercompactness can be of minimal size. The implication $(3) \implies (2)$ follows from the fact that for $\theta = |H(\lambda)|$, $\theta^{<\kappa} = \theta$. Then, using Hauser's trick, \cite{Hauser91}, one can obtain an elementary embedding that respects a bijection between $M$ and $\theta$. Namely, let $M$ be as in (2) and let $\bar{M}$ be a model with the same universe as $M$ and an additional functions symbol $f$ which we interpret as a bijection between $\theta$ and $M$. Note that $M$ satisfies the assertion that for every $x$ of size $<\kappa$, there is $y$, such that $y = f \image x$. Apply the elementary embedding and using the nearly supercompactness hypothesis, we obtain a model $\bar{N}$ with a function symbol that we denote by $j(f)$, and moreover $j \image M = j(f)\image (j\image \theta) \in \bar{N}$. Reducing the language by removing the function symbol of $j$, we obtain the result. 

Let us show that (1) implies (2). Let $\kappa$ be $\lambda$-$\Pi^1_1$-subcompact and let $M$ witness that (2) is false. This means that $M$ is transitive, $M^{<\kappa} \subseteq M$, $|M| = |H(\lambda)|$ and for every transitive model $N$ and embedding $j \colon M \to N$, either $j$ is not elementary or $j\image M\notin N$. Since we may assume that the model $N$ has size $|H(\lambda)|$ (by taking an elementary substructure), this statement can be coded as a $\Pi^1_1$-statement on $H(\lambda)$, using some predicate $A$ in order to code the model $M$ and its elementary diagram. 

Applying (1), and Definition \ref{definition:pi11-subcompact}, we obtain cardinals $\bar{\kappa}$ and $\bar{\lambda}$ below $\kappa$ and a predicate $\bar{A}$ on $H(\bar{\lambda})$ that codes some transitive model $\bar{M}$. We also obtain an elementary embedding $\tilde{j} \colon \langle H(\lambda), \in \bar{A}\rangle \to \langle H(\lambda), \in, A\rangle$. By unwrapping the code $A$ for $M$, we conclude that $\tilde{j}$ codes an elementary embedding $j \colon \bar{M} \to M$. Recall that $\kappa$ is strongly inaccessible and $\bar\lambda < \kappa$, so $|H(\bar\lambda)| = |\bar{M}| < \kappa$. Since $M$ is closed under sequences of size $<\kappa$, $j\image \bar{M} \in M$. Let us take an elementary substructure of $M$ that contains $j\image \bar{M},\, \{j\image \bar{M}\}$ as elements of size $|H(\bar{\lambda})|$ and closed under $<\bar\kappa$-sequences. The transitive collapse of this model is coded by some subset of $H(\bar{\lambda})$ witnessing that the above $\Pi^1_1$-statement fails in $H(\bar{\lambda})$.  

Let us prove that (2) implies (1). Let us assume that (2) holds, and let us show that $\kappa$ is $\lambda$-$\Pi^1_1$-subcompact. Let $\Phi$ be a $\Pi^1_1$-statement that holds in the model $\langle H(\lambda), \in, A\rangle$. Applying the hypothesis, there is an elementary embedding with critical point $\kappa$ between some transitive model $M\supseteq H(\lambda) \cup \{A, H(\lambda)\}$ and a transitive model $N$ such that $j\image M \in N$ and $\lambda < j(\kappa)$. 

Since $N$ is closed under basic operations on sets, $j \image H(\lambda) = j(H(\lambda)) \cap j\image M \in N$. By taking the transitive collapse of $j \image H(\lambda)$ inside $N$, we conclude that $H(\lambda), A \in N$.

Working in $N$, the following hold: 
\[N \models \text{``}\langle H(\lambda), \in, A\rangle \models \Phi\text{''},\]
Let us note that the model $N$ does not contain all subsets of $H(\lambda)$, which means that in general the truth value of second order formulas would not be absolute between $N$ and $V$. So, the validity of the formula $\Phi^N$ uses the fact that $\Phi$ is a $\Pi^1_1$-statement.
In $N$, there is an elementary embedding $k = j\restriction H(\lambda)$ from the structure $\langle H(\lambda), \in ,A\rangle$ to $\langle j(H(\lambda)), \in, j(A)\rangle$. with critical point $\kappa$ and $k(\kappa) = j(\kappa) > \lambda$. 
Thus, by elementarity of $j$, the same holds in $M$: there are $\bar{\kappa}, \bar{\lambda} < \kappa$, $\bar{A} \subseteq H(\bar{\lambda})$ and an elementary embedding $\bar{k} \colon \langle H(\bar\lambda), \in, \bar{A}\rangle \to \langle H(\lambda), \in, A\rangle$. Moreover, we have $\langle H(\bar\lambda), \in, \bar{A}\rangle \models \Phi$.
\end{proof}

By starting with a more well-behaved model $M$, the $\lambda$-$\Pi^1_1$-subcompactness of $\kappa$ yields a better closure properties for the target model $N$, than what is stated in Lemma \ref{lemma: subcompact using elementary embedding}(2). 
\begin{lemma}\label{lemma:better closure for N}
Let $\kappa$ be $\lambda$-$\Pi^1_1$-subcompact. Let $M$ be a transitive model such that 
\begin{enumerate}
\item $|M| = |H(\lambda)|$ and $M$ is closed under $<\lambda$-sequences,
\item $P_\kappa\lambda, \lambda \in M$,
\item\label{requirement:M is a model of ZFC} $M$ is the transitive collapse of some elementary submodel of $H(\chi)$, for some $\chi$. 
\item $M$ has definable Skolem functions and 
\item $M$ contains a function symbol $g$ which is evaluated as a bijection between $\lambda$ and $M$. 
\end{enumerate}
Then, one can get $j \colon M \to N$ such that $N$ is closed under $<\lambda$-sequences, $\crit j = \kappa$ and $j \image M \in N$. 
\end{lemma}
\begin{proof}
Let $j\colon M \to N$ be as in Lemma \ref{lemma: subcompact using elementary embedding}. Since $\lambda \in M$, $j\image \lambda \in N$. 

Let us use the seed hull, as in \cite{Hamkins}, 
\[\mathbb{X}_{\{j\image \lambda\}} := \{j(f)(j\image \lambda) \mid f \in M,\, f \colon P_\kappa \lambda \to M\}.\]
Hamkins proves that $\mathbb{X}_{s} \prec N$ for every set of seeds $s$, and in particular $j \colon M \to \mathbb{X}_s$ is elementary. Hamkins' proof is done in the context of an elementary embedding from $V$ to some class, but it goes without change to our case, under the assumption that $M$ has definable Skolem functions.  

Let $\pi \colon \mathbb{X}_{\{j\image \lambda\}} \to N'$ be the transitive collapse, so $k = \pi \circ j \colon M \to N'$ is an elementary embedding. In order to show that $\crit k = \kappa$, let us notice that all ordinals up to $\lambda$ belong to $\mathbb{X}_{\{j\image \lambda\}}$, so $\crit \pi^{-1} \geq \lambda$. The main point is to verify that $N'$ is closed under $<\lambda$-sequences. Let $\{ y_i \mid i < i_\star\} \subseteq N'$, $i_\star < \lambda$. Then, by the definition of $N'$, for each $i$ there is a function $f_i \in M$ such that $y_i = \pi(j(f_i)(j\image \lambda))$. By the closure of $M$, $\vec{f} = \langle f_i \mid i < i_\star\rangle \in M$. 

Let us look at $k(\vec{f}) \in N'$. Since $k$ is elementary $N'$ satisfies enough set theory. Since $k \image \lambda = \pi(j\image \lambda) \in N'$, we conclude that 
\[k(\vec{f}) \image (k\image \lambda \cap k(i_\star)) = \{k(f_i) \mid i < i_\star\} \in N'.\]
Therefore, 
$A = \{k(f_i)(k\image \lambda) \mid i < i_\star\} \in N'$. Applying $\pi^{-1}$ and using the fact that $\crit \pi^{-1} \geq \lambda$, we get 
\[\pi^{-1}(A) = \{j(f_i)(j\image \lambda) \mid i < i_\star\},\]
Applying $\pi$ again the result follows.\end{proof}

Of course, Lemma \ref{lemma:better closure for N}(\ref{requirement:M is a model of ZFC}) can be replaced with the assertion that $M$ satisfies some weak version of set theory.

It is interesting to compare the relationship between Lemma \ref{lemma: localized strong compactness} and Lemma \ref{lemma: subcompact using elementary embedding} to the relationship between the strongly compact and the supercompact elementary embeddings. This comparison points to a possible normality assumption that should be added to the local $\mathcal{L}_{\kappa,\kappa}$-compactness characterization in order to get a model theoretical characterization of $\lambda$-$\Pi^1_1$-subcomapctness. 
Following Benda, \cite{Benda78}, we suggest to use type omission as a possible candidate for this additional hypothesis in the next section. 
\section{Type omission and \texorpdfstring{$\Pi^1_1$}{pi11}-subcompactness}\label{section: type omission}
We will use the following definition of a club, due to Jech \cite[Section 3]{Jech1972}.
\begin{definition}
Let $\kappa$ be a regular cardinal and let $X$ be a set. A set $C \subseteq P_\kappa X = \{x \subseteq X \mid |x| < \kappa\}$ is a club if
\begin{itemize}
\item for every $x \in P_\kappa X$ there is $y \in C$, $x \subseteq y$ and
\item for every increasing sequence $\langle x_i \mid i < i_\star\rangle$, $i_\star < \kappa$, $x_i \in C$, $\bigcup x_i \in C$. 
\end{itemize}
\end{definition}
By a theorem of Menas, every club contains a club of the form $C_F$ where $F \colon X \to P_\kappa X$ and $C_F = \{x \in P_\kappa X \mid \bigcup (F \image x) \subseteq x\}$, see \cite[Proposition 4.6]{JechHandbook}.
 
\begin{definition}
Let $\kappa \leq \lambda$ be cardinals and let $\mathcal{L}$ be a logic extending first order logic.  We say that \emph{$\kappa$-$\mathcal{L}$-compactness with type omission for languages of size $\lambda$} holds if for every $\mathcal{L}$-theory $T$ and $\mathcal{L}$-type $p$ such that for club many $T' \cup p'\in P_\kappa (T \cup p)$ there is a model of $T'$ that omits $p'$, then there is a model that realizes the theory $T$ and omits the type $p$.
\end{definition}
We remark that omitting larger types is easier while realizing larger theories is more difficult. In particular, any omitable type has a non-omitable subtype (e.g., the empty subtype is non-omitable). Thus, the restriction of the pairs of sub-theory and sub-type to some club is somewhat natural.

Benda prove \cite{Benda78} that compactness of type omission over $\mathcal{L}_{\kappa,\kappa}$ over arbitrary languages is equivalent to supercompactness. We give a different argument that provides a local equivalence and use only first order types and theories, with no infinitary quantifiers and connectors. 

\begin{theorem}\label{theorem: type omission and subcompactness}
Let $\kappa \leq \lambda = \lambda^{<\kappa}$ be cardinals, $\kappa$ regular and uncountable. The following are equivalent.
\begin{enumerate}
\item For every transitive model $M$ of size $\lambda$, ${}^{<\kappa}M\subseteq M$, there is a transitive model $N$ and an elementary embedding $j\colon M \to N$ such that $\crit j = \kappa$, $\lambda < j(\kappa)$ and $j \image M \in N$.
\item  $\kappa$-$\mathcal{L}_{\kappa,\kappa}$-compactness with type omission for languages of size $\lambda$ holds.
\item $\kappa$-$\mathcal{L}_{\omega,\omega}$-compactness with type omission for languages of size $\lambda$ holds.\footnote{The above formulation was obtained in response to a private communication with Will Boney. In the previous version of this paper, there was an additional well foundedness hypothesis.}
\end{enumerate}
\end{theorem}
\begin{proof}
Clearly $(2)\implies (3)$. 

Let us show that $(1)\implies (2)$. Let $T, p$ be as in the assumptions of $(2)$. Let $C$ be a club in $P_\kappa (T \cup p)$ such that for every $T' \cup p' \in C$ there is a model for $T'$ that omits $p'$. Let us apply Lemma \ref{lemma: subcompact using elementary embedding} for some $<\kappa$-closed transitive model $M \prec H(\chi)$ for $\chi$ sufficiently large, $|M| = \lambda$ and $T, p, C \in M$. By applying Menas' lemma in $M$, we obtain a function $F \in M$, $F\colon T \cup p \to P_{\kappa}(T \cup p)$ such that $C_F \subseteq C$. Using the hypothesis, we obtain an elementary embedding $j \colon M \to N$ where $N$ is transitive and $j\image M \in N$. Thus, $j(T) \cap j\image M = j\image T,\ j(p) \cap j\image M = j\image p, j(C) \cap j\image M = j \image C$ are in $N$.  Since $X = (j\image T) \cup (j\image p) = \bigcup j\image C \in N$, $|C| = \lambda < j(\kappa)$, and $X$ is closed under $j(F)$, we conclude that $(j\image T) \cup (j\image p) \in j(C_F) \subseteq j(C)$. 

So, in $N$ there is a model $\mathcal{A}$ for the theory $j\image T$ that omits the type $j\image p$. Although the language for the theory and the type is the value under $j$ of the original language and might contain more symbols, the symbols that appear in $j\image T$ and $j\image p$ are only the $j$-images of the original symbols. Therefore, by applying $j^{-1}$ on those symbols we conclude that $\mathcal{A}$ is isomorphic to a model for $T$ that omits $p$.

Let us now consider $(3)\implies (1)$. Let $M$ be a transitive model of size $\lambda$ which is closed under $<\kappa$-sequences. We would like to find an elementary embedding with critical point $\kappa$ and a model $N$ such that $j\image M \in N$ and $\lambda < j(\kappa)$. Similarly to the proof of Lemma \ref{lemma: localized strong compactness}, we define a language that contains for every $x\in M$ a constant $c_x$ as well as two additional constants $d, s$. We intend $d$ to be the critical point $\kappa$ and $s$ to be the set $j \image M$. 

The theory $T$ contains the statement ``$d$ is an ordinal below $c_\kappa$'', and the statements ``$c_\alpha \in d$'' for all $\alpha < \kappa$. We also include in $T$ the assertions ``$c_x \in s$'' for all $x\in M$ and ``$|s| < c_\kappa$'' (namely, that there is an injection from $s$ to a bounded ordinal below $c_\kappa$). Finally, we include in $T$ 
the full $\mathcal{L}_{\omega,\omega}$-elementary diagram of $M$.

We would like also to define a type that will be omitted. There are two offending objects that we would like to omit from our model: either witnesses for $s \neq j\image M$ or critical points below $\kappa$. The type $p$ is going to handle both cases. $p(x)$ is going to be the type of an element which is either in $s$ but not $c_z$ for any $z\in M$, or below $d$ but not in $s$. Namely, 
\[p(x) = \{\text{``} x\in s \cup d \text{''}\} \cup \{\text{``} x \neq c_z\text{''} \mid z \in M\}.\]

We would like to show that indeed on a club in $P_{\kappa} (T \cup p)$, there is a model for the sub-theory that omits the sub-type. Let $\theta$ be a sufficiently large regular cardinal, $M, T, p\in H(\theta)$.  

Let us fix a well order of $H(\theta)$, $\leq_\theta$. Pick some enumeration $e$ of $T$ and $p$ and let $C'$ be the club of all elementary substructures $Y \prec \langle H(\theta), \in, \leq_\theta\rangle$ that contains $M, T, p, e$ and satisfy $Y \cap \kappa \in \kappa$. Let $\Hull$ denote the Skolem hull function in the structure $\langle H(\theta), \in, \leq_\theta, M, T, p, e\rangle$, defined using the well order $\leq_\theta$.  So $Y \in C'$ iff $Y = \Hull(A)$ for some $A$, $|A| < \kappa$ and $Y \cap \kappa \in \kappa$.

Let us assume moreover that the map $x \mapsto c_x$ is definable in $H(\theta)$. Each such model $Y$ would be closed under sub-formulas, so if a formula $\varphi$ in $T$ or in $p$ contains the constant $c_x$ as a sub-formula and $e(\varphi) \in Y$ then $x \in Y$. 
Thus, for any element $x \in M$ the constant $c_x$ appears as a sub-formula of a formula in $Y \cap (T \cup p)$, if and only if $x$ belongs to $Y \cap M$.  

Let $T' \cup p' \in C$ iff $T' \cup p' \in P_\kappa(T' \cup p')$, $\Hull(T' \cup p') = Y \in C'$, and $Y \cap (T \cup p) = T' \cup p'$. One can easily describe a function $f \colon (T \cup p)^{<\omega} \to P_{\kappa}(T \cup p)$ such that $C$ consists of all elements which are closed under $f$, so in particular $C$ is a club.

Let us consider $T' \cup p'$ in $C$ and let $X \prec M$ be a corresponding elementary submodel, $X = \Hull(T' \cup p') \cap M$. We claim that the model $M$ itself with the evaluations $c^M_a = a$ for every $a\in X$, $d^M = X \cap \kappa$ and $s^M = X$ realizes $T'$ while omitting $p'$. First, $d^M$ is an ordinal below $c^M_\kappa$. Moreover, by the closure assumption, if a constant $c_a$ appears in $T'$ then $a\in X$. In particular, this model satisfies that whenever $c_a\in s$ appears in $T'$ then $a\in X$. Since $M$ is $<\kappa$ closed, there is some bijection between $X$ and an ordinal below $\kappa$ in $M$. The other assertions in $T'$ follow similarly. 

Let us consider $p'$. If $M$ does not omit $p'$ then there is some element $x \in M$ such that $x\in X \cup d^M$, $x \neq c_z^M$ for every $z$ such that the formula ``$x \neq c_z$'' belongs to $p'$. By the definition of $d^M$, $d^M \subseteq X$. By the closure of $Y$  ``$x \neq c_z$'' appears in $p'$ if and only if $z\in X$, which is what we need.

Now, we may apply the hypothesis of the lemma and obtain a model $N$ for $T$ that omits $p$. As in the proof of Lemma \ref{lemma: localized strong compactness}, the embedding $j\colon M \to N$ which is defined by $j(z) = c_z^N$ is an elementary embedding with critical point $\kappa$. By the type omission, $s^N = j \image M$. 

We are not done yet, since $N$ might be ill founded.\footnote{In the previous version of this paper, we added the hypothesis that the theory $T$ includes the $\mathcal{L}_{\omega_1,\omega_1}$-formula stating that the membership relation is well-founded.}  

\begin{claim}
Let $\kappa \leq \lambda$ be cardinals. Let us assume that for any transitive $M$ with $M^{<\kappa} \subseteq M$ and $|M| \leq \lambda$, there is a model $N$ and an elementary embedding $j\colon M \to N$, with $j\image M \in N$, $\crit j = \kappa$, $j(\kappa) > \lambda$.  
Then, for any such $M$, we can get the same conclusion with $N$ being transitive.
\end{claim}
\begin{proof}
Let $M$ be a model, satisfying the hypothesis of the claim. Let $M' \prec H(\chi)$ ($\chi$ large enough) be a larger model (so $M \subseteq M'$), closed under $<\kappa$-sequences, such that $M \in M'$, $\lambda + 1 \subseteq M'$ and $|M'| = \lambda$. Let $\bar{M}'$ be the transitive collapse of $M'$. Since $M$ is transitive, $M \in \bar{M}'$.

Let $j \colon \bar{M}'\to N'$ be as in the hypothesis of the claim. Note that $j \restriction M \colon M \to j(M)$ is a member of $N'$, as the intersection of $j \image \bar{M}'$ and $j(M)$. 

First, note that $j \image \lambda = j(\lambda) \cap j\image \bar{M}'\in N'$. Since $N'$ can compute the transitive collapse of $j\image \lambda$, we conclude that $\mathrm{Ord}^{N'} \supseteq \lambda + 1$. Similarly, since ${}^{<\kappa}\lambda \subseteq \bar{M}'$, we have ${}^{<\kappa}\lambda \in N'$. Since $j \image M \in N'$, we conclude that $j\image \lambda^{<\kappa} \in N'$ as well. 

Let $F \colon \lambda^{<\omega} \to M$ be a function such that $F \restriction \lambda$ is a bijection between $\lambda$ and $M$, and $F$ codes the Skolem functions of $M$, so for every $a \subseteq \lambda$ non-empty, $F \image [a^{<\omega}] \prec M$.  

Since $M$ is closed under $<\kappa$ sequences and $|M|^{N'} = \lambda < j(\kappa)$, we conclude that $j\image M \in j(M)$ and in particular there is some $\delta < j(\lambda)$ such $j(F)(\delta) = j\image M$.
Let us consider the following subset of $N'$, 
\[\tilde{N} = j(F) \image (j \image (\lambda^{<\kappa}) \cup \{\delta\}).\]
By the properties of $F$, $j\image M \subseteq \tilde{N}$ and $N' \models \tilde{N} \prec j(M)$. Since $N' \in V$, $V \models \tilde{N} \prec j(M)$ (see \cite{HamkinsYang}). 

Let us claim that $\tilde{N}$ is well-founded. If not, then there is an $\omega$-sequence of elements $a_n \in \tilde{N}$ such that $a_{n+1} \in^{N'} a_n$. Each $a_n$ is of the form $F(j(b_n), \delta)$ where $b \in \lambda^{<\kappa}$. Using the regularity of $\kappa$, there is $c \in j\image \lambda^{<\kappa} \subseteq N'$ such that $c$ codes the $\omega$-sequence $\langle j(b_n) \mid n < \omega\rangle$. So, this sequence is a member of $N'$. But this is absurd, as  this would imply that the sequence $\langle a_n\mid n <\omega\rangle$ is a member of $N'$, violating the fact that $N'\models$ Axiom of Foundation.  

So, we conclude that $\tilde{N}$ is well founded and for every $x \in M$, $j(x) \in \tilde{N}$. Let $\pi \colon \tilde{N} \to N$ be the transitive collapse and let $k \colon M \to N$ be $k(x) = \pi(j(x))$ for $x \in M$. It is straight-forward to verify that $k$ is an elementary embedding, $\crit k = \kappa$ and $k(\kappa) > \lambda$. Moreover, $k \image M = \pi(j\image M) \in N$. 
\end{proof}
This concludes the proof of Theorem \ref{theorem: type omission and subcompactness}.
\end{proof}

\begin{corollary}
For $\kappa \leq \lambda$, $\kappa$ is $\lambda$-$\Pi^1_1$-subcompact if and only if compactness for $\mathcal{L}_{\kappa,\kappa}$ with type omission holds for languages of size $|H(\lambda)|$.
\end{corollary}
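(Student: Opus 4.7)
The plan is to deduce the corollary directly from Theorem \ref{theorem: type omission and subcompactness} by substituting $\lambda$ with $\lambda' := |H(\lambda)|$, after verifying that the hypotheses of the theorem hold at the new parameter.

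First I would establish that on either side of the claimed equivalence the cardinal $\kappa$ must be inaccessible (in fact weakly compact), since the theorem takes this as a standing hypothesis. From the $\lambda$-$\Pi^1_1$-subcompactness side this follows because any $\Pi^1_1$-statement over $\langle H(\kappa), \in, A\rangle$ is encoded by a $\Pi^1_1$-statement over $\langle H(\lambda), \in, A\rangle$ (using that $H(\kappa)$ is a bounded definable subclass of $H(\lambda)$ and that all subsets of $H(\kappa)$ are members of $H(\lambda)$), so reflection at $\lambda$ delivers $\kappa$-$\Pi^1_1$-subcompactness, i.e., weak compactness. From the compactness-with-type-omission side one obtains weak compactness of $\kappa$ via the classical derivation (tree property and inaccessibility) from $\mathcal{L}_{\kappa,\kappa}$-compactness. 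The arithmetic hypothesis $(\lambda')^{<\kappa} = \lambda'$ is then routine cardinal arithmetic under inaccessibility of $\kappa$ with $\kappa \leq \lambda$.

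Second, I would invoke Lemma \ref{lemma: subcompact using elementary embedding}, which reformulates $\lambda$-$\Pi^1_1$-subcompactness as: for every transitive model $M$ of size $|H(\lambda)|$ closed under $<\kappa$-sequences, there is a transitive $N$ with ${}^{<\kappa}N \subseteq N$ and an elementary $j\colon M \to N$ with $\crit j = \kappa$ and $j\image M \in N$. This is exactly clause (1) of Theorem \ref{theorem: type omission and subcompactness} applied at $\lambda'$. By the equivalence (1)$\Leftrightarrow$(2) of that theorem, it is equivalent to $\kappa$-$\mathcal{L}_{\kappa,\kappa}$-compactness with type omission for languages of size $\lambda' = |H(\lambda)|$, which is the right-hand side of the corollary.

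The main obstacle is essentially pedagogical rather than technical: everything beyond the inaccessibility bootstrap is formal substitution, so the real work was already done in Lemma \ref{lemma: subcompact using elementary embedding} and Theorem \ref{theorem: type omission and subcompactness}. The one step that deserves a sentence of care is the derivation of weak compactness of $\kappa$ from the pure compactness-with-type-omission hypothesis, since the theorem quoted above cannot itself be used until inaccessibility is in hand.
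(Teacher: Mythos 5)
Your derivation is correct and is exactly the route the paper intends: the corollary is stated without proof as the immediate combination of Lemma \ref{lemma: subcompact using elementary embedding} (which recasts $\lambda$-$\Pi^1_1$-subcompactness as the embedding property for transitive models of size $|H(\lambda)|$) with the equivalence $(1)\Leftrightarrow(2)$ of Theorem \ref{theorem: type omission and subcompactness} applied at $\lambda'=|H(\lambda)|$. Your extra care in bootstrapping inaccessibility of $\kappa$ (for which, on the type-omission side, one reduces to plain $\mathcal{L}_{\kappa,\kappa}$-compactness via a trivially omitted type such as $\{x\neq x\}$) is a reasonable filling-in of a hypothesis the paper leaves implicit, not a departure from its argument.
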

Quantifying $\lambda$ out, we obtain a characterization for supercompactness. The first equivalence is due to Benda:
\begin{corollary}
The following are equivalent:
\begin{itemize}
\item $\kappa$ is supercompact. 
\item $\kappa$-$\mathcal{L}_{\kappa,\kappa}$-compactness with type omission. 
\item $\kappa$-$\mathcal{L}_{\omega,\omega}$-compactness with type omission.
\end{itemize}
\end{corollary}

\section{Ladder systems and trees}\label{section: ladder systems}
The following concept, isolated by Jech \cite{Jech1972} (under the name $(\kappa,\lambda)$-mess), generalizes the notion o a $\kappa$-tree two two-cardinal context and is suitable for the investigation of strongly compact and supercompact cardinals.


Recall, that for a cardinal $\rho$, we denote by $P_\rho X$ the set of all subsets of $X$ of size $<\rho$. In particular, for $\rho' < \rho$, $P_{\rho'}X \subseteq P_{\rho}X$.

Note that we do not assume that $\rho$ is regular. A set $E\subset P_\rho X$ is a club if for every $x \in P_\rho X$ there is $y \in E$ such that $x \subseteq y$ and $E$ is closed under increasing unions of length $<\cf \rho$. 
\begin{definition}[Jech]
A $P_\kappa\lambda$-tree is a function $\mathcal{T} = \langle \mathcal{T}_x \mid x \in P_\kappa \lambda\rangle$ such that:
\begin{itemize}
\item For every $x\in P_\kappa \lambda$, $\mathcal{T}_x \subseteq {}^x 2$, non-empty.
\item For $x \subseteq y \in P_\kappa \lambda$ and $\eta \in \mathcal{T}_y$, $\eta \restriction x \in \mathcal{T}_x$. 
\item For every $x\in P_\kappa \lambda$, $|\mathcal{T}_x| < \kappa$.
\end{itemize}
\end{definition}
We call $\mathcal{T}_x$ the $x$-th level of $\mathcal{T}$. A \emph{branch} through $\mathcal{T}$ is a function $\eta \colon \lambda\to 2$ such that $\eta\restriction x \in \mathcal{T}_x$ for all $x\in P_\kappa\lambda$. If $\kappa$ is inaccessible, then the third requirement holds trivially. 
In \cite[Section 2]{Jech1972}, Jech showed that $\kappa$ is strongly compact if and only if every $P_\kappa\lambda$-tree has a branch and $\kappa$ is inaccessible. Removing the inaccessibility assumption, this property is called the \emph{Strong Tree Property}, and it is known to consistently hold at accessible cardinals, see for example, \cite{VialeWeiss2011, Fontanella2014}. 

\begin{definition}\label{definition:ladder system}
Let $\mathcal T$ be a $P_\kappa\lambda$ tree. A set $L$ is a \emph{ladder system} on $\mathcal T$ if the following holds:
\begin{enumerate}
\item $L \subseteq \bigcup_{x\in P_\kappa\lambda} \mathcal T_x$.
\item\label{condition:club many} For club many levels $x$, $L \cap \mathcal T_x \neq \emptyset$. 
\item\label{condition:downwards clubs} If $\eta \in L \cap \mathcal T_x$ and $\cf (|x \cap \kappa|) > \omega$ then there is a club $E_\eta \subseteq P_{|x \cap \kappa|} x$ such that $\{\eta \restriction y \mid y \in E_\eta\} \subseteq L$.
\end{enumerate}

Let $\rho \leq \kappa$ be a regular cardinal.

A cofinal branch $b$ through $\mathcal T$ meets the ladder system $L$ $\rho$-\emph{cofinally} if for every $x \in P_\rho\lambda$ there is $z \supseteq x$ such that $b\restriction z \in L$.

A cofinal branch $b$ through $\mathcal T$ meets the ladder system $L$ $\rho$-\emph{club often} if for club many $x \in P_\rho\lambda$, $b\restriction x \in L$.
\end{definition}

Intuitively, a ladder system consists of a collection of ``good nodes'' in the tree which we would like the branch to go through, similarly to the \emph{Ineffable Tree Property} (ITP), \cite{Weiss}. Unlike ITP, we weaken our requirement by making sure that the set of good nodes is very rich --- below any node in a level of uncountable cofinality (in some sense) there are club many restrictions which are good as well.

\begin{definition}\label{def:ladder system catching}
Let $\rho \leq \kappa < \mu$ be cardinals. We say that \emph{ladder system catching property at $\rho$-clubs} (\emph{at $\rho$-cofinal sets}) for $P_\kappa\mu$ trees holds, if for every $P_\kappa \mu$-tree $\mathcal T$ and a ladder system $L$ there is a cofinal branch $b$ of $\mathcal{T}$ that meets the ladder system $L$ on a $\rho$-club (on a $\rho$-cofinal set).

We abbreviate this property by $\LSCP(P_\kappa\lambda, \rho\text{-clubs})$ or $\LSCP(P_\kappa\lambda,\rho\text{-cofinally})$, respectively. 
\end{definition}


\begin{theorem}\label{theorem: trees with ladders to subcompactness}
Let $\kappa \leq \mu= \mu^{<\kappa}$ be cardinals, $\kappa$ is inaccessible. The following are equivalent:

\begin{enumerate}
\item $\kappa$-$\mathcal{L}_{\omega,\omega}$-compactness with type omission for languages of size $\mu$.
\item $\LSCP(P_\kappa\mu, \kappa\text{-clubs})$. 
\item $\LSCP(P_\kappa\mu, \kappa\text{-cofinal})$. 
\end{enumerate}
\end{theorem}
\begin{proof}
$(2)\implies (3)$ is trivial.

Let us show that $(3) \implies (1)$. Recall that $(1)$ is the statement: for every first order theory $T$ over a language of size $\mu$ and a type $p(x)$, if for club many $T' \cup p' \in P_{\kappa}(T \cup p)$ there is a model $M$ that satisfies $T'$ and omits $p'$ then there is a model of $T$ that omits $p$. 

Indeed, let us assume that $T$ is a first order theory over a relational language $\mathcal{L}$ with $\mu$ many symbols and $p$ is a type. We will assume that $T$ is Henkinized (for every sentence of the form $\psi := \exists x \varphi(x, r)$, where $r$ is a sequence of constants, there is a constant $c_{\varphi, r}$ such that $c$ is a witness to the formula $\psi$ if and only if $\psi$ holds). 
So, finding a model for $T$ that omits the type $p$ is the same as finding a consistent complete extension, $\tilde{T}$, in which for every constant $c$ there is $\phi(x) \in p(x)$ such that $\neg \phi(c) \in \tilde{T}$. 
Let us assume that there are club many $T'\cup p'$ such that there is a model $M$ that realizes $T'$ and omits $p'$. Let us construct a tree $\mathcal{T}$ as follows. Pick an enumeration $e$ of length $\mu$ of all terms and formulas in the language $\mathcal{L}$. 

For every $a \in P_\kappa \mu$, $\eta \in \mathcal{T}_a$ if and only if there is a model $M_\eta$ such that 
\begin{enumerate}
\item for every $\alpha \in a$, if $e(\alpha)$ is a sentence then $\eta(\alpha) = 1 \iff M \models e(\alpha)$, and 
\item for every $e(\alpha) \in T$, $\eta(\alpha) = 1$.
\end{enumerate}

Since we assumed that the language $\mathcal{L}$ is Henkenized, for every $a \subseteq b \in P_\kappa \mu$, and $\eta \in \mathcal{T}_b$, the function $\eta \restriction a$ defines a sub-model of the model $M_\eta$, assuming that $e\image a$ is closed under sub-formulas and sub-terms. So, in this case we say that $\eta \restriction a$ omits $p \cap e\image a$ if there is no constant $c \in e \image a$ such that for every $\varphi(x) \in p$, $\eta(e^{-1}(\varphi(c))) = 1$.   

We are now ready to construct the ladder system $L$.
Let $a \in P_\kappa \mu$, such that the collection of formulas in $e \image a$ is closed under sub-formulas and apply substitution of a variable with a term in a formula.
Let us define $\eta \in L \cap \mathcal{T}_a$ if there is a model $M_\eta$ that omits $e \image a \cap p$. Note that if $a \cap \kappa$ is of uncountable cofinality, then there are club many $b \in P_{|a \cap \kappa|} a$ such that $\eta \restriction b$ is an assignment of a Henkenized theory, $e\image b$ is closed under sub-formulas and omits $e \image b \cap p$. Indeed, in order to omit the sub-types of $p$, one needs to verify that for every constant symbol $c$ in $e \image b$, there is a formula $\varphi \in p \cap e \image b$ such that $\varphi(c) \in e \image b$ and $\eta(e^{-1}(\varphi(c)) = 0$. Since this is true for $\eta$ and $a$, we can define a function sending $c \in e \image a$ so $\varphi \in p \cap e \image a$ such that $\varphi(c) \in e \image a$ and $M_{\eta} \models \neg \varphi(c)$. Thus, for any $b \subseteq a$ which is closed under this function, would omitted the subtype $p \cap e\image b$. 

Let $b$ be a cofinal branch though the tree $\mathcal T$, and assume that $b$ meets $L$ cofinally. Since $b$ is a cofinal branch, it defines a complete theory extending $T$ and thus a model of $T$. Let us call this model $M_b$. We want to verify that the type $p$ is omitted. Indeed, let $z \in M_b$. Let $x\in P_\kappa \lambda$ contain the ordinal in which the constant for $z$ is enumerated. Let $y\supseteq x$ such that there is $\eta' \in L \cap\ \mathcal{T}_y$, $\eta' = b \restriction y$. Since $\eta'$ represents a model that omits a sub-type of $p$ and contains the constant $z$, there must be a formula $\varphi \in p \cap e\image y$ such that $\beta = e(\neg\varphi(z)) \in \dom \eta'$ and $\eta'(\beta) = 1$.  Thus, $z$ does not realize $p$. 

Let us finally show $(1) \implies (2)$. Let $M$ be a transitive model of size $\mu$ containing $T$ and $L$, and closed under $<\kappa$-sequences. 
By Theorem \ref{theorem: type omission and subcompactness} and the hypothesis, there is an elementary embedding $j\colon M \to N$, with critical point $\kappa$, $j\image \mu\in N$. Let $D \subseteq P_\kappa\mu$ be a club such that for all $x \in D$, $L \cap \mathcal{T}_x \neq \emptyset$ and belongs to $M$. Then $j\image \mu\in j(D)$ and in particular, there is some $\tilde{\eta} \in j(\mathcal{T})_{j\image \mu}\cap j(L)$. 

Let $b$ be the following branch: 
\[b(x) = j^{-1} \image (\tilde{\eta} \restriction j\image x) = \{(\zeta, \epsilon) \mid b(j(\zeta)) = \epsilon\}.\] 
Buck in $M$, let $E_\eta$ be the club, as in Definition \ref{definition:ladder system}, (3). Let us apply $j$ to the function $\eta \mapsto E_\eta$ and let $\tilde{E}$ be the obtained club. So $\tilde{E}$ is a club on $P_\kappa j\image\mu$ (since $\kappa = |j\image \mu\cap j(\kappa)|$). For every $z \in P_\kappa j\image\mu$, $z = j(w)$ for some $w \in P_\kappa \mu$, so $D = j^{-1} \image (j(E)_{\tilde{\eta}})$ is a club in $P_\kappa \mu$. For all $x \in D$, 
$b \restriction x \in L$, (as $j(x) \in E_\eta$ and $j(b \restriction x) = \eta \restriction j(x) \in j(L)$), as wanted.
\end{proof}
Again, by quantifying $\mu$ out, we obtain a characterization for supercompactness: 
\begin{corollary}
The following are equivalent for an inaccessible cardinal $\kappa$:
\begin{enumerate}
\item $\kappa$ is supercompact.
\item For every regular $\mu \geq \kappa$, $\LSCP(P_\kappa\mu, \kappa\text{-cofinally})$.
\end{enumerate}
\end{corollary}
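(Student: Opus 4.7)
The plan is to quantify the parameter $\mu$ out of Theorem \ref{theorem: trees with ladders to subcompactness}, combining the result with the previous corollary's characterization of supercompactness as $\kappa$-$\mathcal{L}_{\kappa,\kappa}$-compactness with type omission at every size. The main subtlety is bridging the ``$\mu$ regular'' hypothesis of clause~(2) and the stronger ``$\mu = \mu^{<\kappa}$'' hypothesis that the theorem assumes.

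For $(1) \Rightarrow (2)$, I would repeat the $(1) \Rightarrow (2)$ argument of Theorem \ref{theorem: trees with ladders to subcompactness}, but replace Lemma \ref{lemma: subcompact using elementary embedding} by a genuine $\mu$-supercompact embedding $j\colon V \to M$ with $\crit j = \kappa$, $j(\kappa) > \mu$, $j\image \mu \in M$ and $M^\mu \subseteq M$. Since the club witnessing condition~\ref{condition:club many} for $L$ belongs to the normal fine measure derived from $j$, one has $j\image\mu \in j(\text{that club})$ and can pick $\eta \in j(L) \cap j(\mathcal T)_{j\image\mu}$. Setting $b(x) = j^{-1}(\eta \restriction j\image x)$ defines a cofinal branch, and the ladder axiom~\ref{condition:downwards clubs} applied to $\eta$ in $M$ yields a club $E_\eta \in M$ in $P_\kappa(j\image \mu)$ of restrictions that stay in $j(L)$. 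Since $M \subseteq V$ and $j \restriction \mu \in V$, for any $x_0 \in P_\kappa \mu$ one extends $j\image x_0$ to some $y \in E_\eta$ and pulls back along $j \restriction \mu$ to obtain $x = \{\alpha < \mu \mid j(\alpha)\in y\} \supseteq x_0$ with $j\image x = y$ and hence $b(x) \in L$.

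For $(2) \Rightarrow (1)$, by the previous corollary it suffices to show $\kappa$-$\mathcal L_{\kappa,\kappa}$-compactness with type omission for languages of arbitrary size $\mu_0$. Pick $\mu \geq \mu_0$ with $\mu = \mu^{<\kappa}$, e.g.\ by iterating $\mu' \mapsto (\mu')^{<\kappa}$ continuously up to stage $\kappa$ starting from $\mu_0$; the resulting $\mu$ has cofinality $\kappa$, so $\mu^+$ is regular. Given a $P_\kappa \mu$-tree $\mathcal T$ with ladder $L$, extend it to a $P_\kappa\mu^+$-tree with ladder by
\[
\tilde{\mathcal T}_y = \{\eta\colon y \to 2 \mid \eta\restriction(y\cap\mu) \in \mathcal T_{y\cap\mu}\},\qquad \tilde L = \{\eta \mid \eta\restriction(y\cap\mu) \in L\}.
\]
Using $y\cap\mu\cap\kappa = y\cap\kappa$, a direct check (pulling back clubs on $P_{|y\cap\kappa|}(y \cap \mu)$ via intersection with $\mu$) shows that $\tilde L$ is a ladder on $\tilde{\mathcal T}$. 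Hypothesis (2) applied at $\mu^+$ produces a cofinal branch of $\tilde{\mathcal T}$ meeting $\tilde L$ cofinally, and its restriction to $\mu$ is a cofinal branch of $\mathcal T$ meeting $L$ cofinally. Since $\mu = \mu^{<\kappa}$, the direction $(3) \Rightarrow (1)$ of Theorem \ref{theorem: trees with ladders to subcompactness} gives compactness with type omission for languages of size $\mu$, hence also for size $\mu_0 \leq \mu$.

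The main obstacle is verifying that the extension $(\mathcal T, L) \rightsquigarrow (\tilde{\mathcal T}, \tilde L)$ in $(2) \Rightarrow (1)$ really produces a tree with a ladder system, especially that the downward club clause of Definition \ref{definition:ladder system} survives the extension; this is routine but does require some bookkeeping with the clubs attached to individual nodes. Everything else in the proof is a direct adaptation of the corresponding pieces of Theorem \ref{theorem: trees with ladders to subcompactness} and of the previous corollary.
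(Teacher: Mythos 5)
Your proof is correct, and it follows the route the paper intends (the paper offers no written proof beyond ``quantify $\mu$ out'' of Theorem \ref{theorem: trees with ladders to subcompactness} together with the earlier type-omission characterization of supercompactness). Two remarks on where you do more work than necessary. For $(2)\Rightarrow(1)$, the entire extension $(\mathcal T,L)\rightsquigarrow(\tilde{\mathcal T},\tilde L)$ is a detour: once you have $\mu=\mu^{<\kappa}$ of cofinality $\kappa$, Hausdorff's formula gives $(\mu^+)^{<\kappa}=\mu^+\cdot\mu^{<\kappa}=\mu^+$, so $\mu^+$ is itself a regular cardinal satisfying the hypothesis of Theorem \ref{theorem: trees with ladders to subcompactness}; applying clause (2) of the corollary at $\mu^+$ and then the implication $(3)\Rightarrow(1)$ of the theorem at $\mu^+$ yields compactness with type omission for languages of size $\mu^+\geq\mu_0$ directly, with no need to transport trees between $\mu$ and $\mu^+$. (Your extension does work --- the pullback of clubs under $y\mapsto y\cap\mu$ behaves as you claim --- it is just superfluous; likewise $\mu_0^{<\kappa}$ already satisfies $(\mu_0^{<\kappa})^{<\kappa}=\mu_0^{<\kappa}$ for regular $\kappa$, so the $\kappa$-fold iteration is not needed.) For $(1)\Rightarrow(2)$, your direct argument with a $\mu$-supercompact embedding is fine and has the advantage of covering regular $\mu$ with $\mu^{<\kappa}>\mu$ without further comment; the alternative, staying entirely inside the already-proved theorem, is to invoke Solovay's theorem that $\mu^{<\kappa}=\mu$ for every regular $\mu\geq\kappa$ when $\kappa$ is strongly compact, after which Theorem \ref{theorem: trees with ladders to subcompactness} applies verbatim at every regular $\mu$. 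Either way the statement follows.
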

\section{Down to \texorpdfstring{$\omega_2$}{aleph2}}\label{section: aleph2}
In the previous sections, the inaccessibility of $\kappa$ played a major role. We might ask whether meeting ladder systems cofinally or club often is still equivalent at accessible cardinals. We will focus on the case of $\omega_2$. In this case, we will refer to Definition \ref{def:ladder system catching} with $\rho = \omega_1$ or $\omega_2$ , which means that even in the case of $P_{\omega_2}\omega_2$-trees, which are typically identified with $\omega_2$-trees, we will need to consider their structure with respect to $P_{\omega_1}\omega_2$ as well.


For $\lambda = \omega_2$, the ordinals are a club in $P_{\omega_2} \lambda$. Nevertheless, for each ordinal $\alpha > \omega_1$, the ordinals below $\alpha$ are not a club in $P_{\omega_1} \alpha$. This means that even in this special case, we cannot treat the trees as simple $\omega_2$-trees but rather as $P_{\omega_2} \omega_2$-trees, where levels of countable size play an important role. This is a non-typical scenario, as restricting the tree and the ladder system to a club does not preserve the properties of the structure. 

\begin{theorem}\label{thm: tree property catching clubs}
It is consistent relative to a supercomapct cardinal, that for every $\mu \geq \omega_2$, $\LSCP(P_{\omega_2}\mu, \omega_1\text{-club})$. 
\end{theorem}
\begin{theorem}\label{thm: catching unbounded but not club}
It is consistent relative to a supercomapct cardinal, that for every $\mu \geq \omega_2$, 
$\LSCP(P_{\omega_2}\mu, \omega_1\text{-cofinally})$ but $\neg \LSCP(P_{\omega_2}\omega_2, \omega_1\text{-club})$.
\end{theorem}

For the first theorem, we will use the standard Mitchell forcing.
\begin{definition}[Mitchell, \cite{Mitchell}]
Let $\kappa$ be an inaccessible cardinal. The Mitchell poset $\mathbb{M}(\kappa)$ consists of conditions $p = \langle a, m\rangle$ where $a \in \Add(\omega,\kappa)$ and $m$ is a partial function with countable support such that for any $\alpha \in \supp m$, $\Vdash_{\Add(\omega,\alpha)} m(\alpha) \in \dot{\Add}(\omega_1, 1)$. 

We order the conditions of the forcing by $\langle a, m\rangle \leq \langle a', m'\rangle$ if $a \leq a'$ in the Cohen forcing $\Add(\omega,\kappa)$, $\dom m \supseteq \dom m'$ and $a \restriction \alpha \Vdash_{\Add(\omega,\alpha)} m(\alpha) \leq_{\dot{\Add}(\omega_1,1)} m'(\alpha)$ for every $\alpha \in \dom m'$.
\end{definition}
In \cite{Mitchell}, Mitchell showed that if $\kappa$ is weakly compact then $\mathbb{M}(\kappa)$ forces that the tree property holds at $\kappa$. Starting with a stronger large cardinal hypothesis, the Mitchell poset can be used to obtain the strong tree property, the ineffable tree property and more (see for example, \cite{Weiss}).

To establish Theorem \ref{thm: tree property catching clubs}, it is enough to proof the following.
\begin{lemma}
Let $\kappa$ be $\lambda$-$\Pi^1_1$-subcompact. Then in the generic extension by $\mathbb{M}(\kappa)$, $\LSCP(P_{\omega_2}\lambda, \omega_1\text{-club})$ holds.
\end{lemma}
\begin{proof}
Let us consider a name for a tree $\dot{T}$ and a ladder system $\dot{L}$ on $P_\kappa \lambda$ of the generic extension. By the $\kappa$-c.c.\ of $\mathbb{M}(\kappa)$, the set $\left(P_\kappa\lambda\right)^V$ is unbounded in $\left(P_\kappa\lambda\right)^{V[G]}$. Moreover, one can easily code all names for elements in $P_\kappa\lambda$, $\dot{T}$ and $\dot{L}$ into a transitive structure $M$ of size $\lambda$. We will assume that $M$ satisfies some portion of $\mathrm{ZFC}$, and in particular it satisfies choice and the basic theory of forcing (including the forcing theorem for $\Sigma_n$ formulas, where $n$ is sufficiently large).

By Lemma \ref{lemma: subcompact using elementary embedding}, there is an elementary embedding 
\[j\colon M \to N,\]
such that $j \image M \in N$. We would like to lift this embedding to an elementary embedding from $M[G]$ to $N[H]$, where $G$ is a $V$-generic filter for $\mathbb{M}(\kappa)$ and $H$ is an $N$-generic filter for $j(\mathbb{M}(\kappa))$. We cannot construct $H$ in $V[G]$, so in order to construct $H$ we force with $j(\mathbb{M}(\kappa)) / \mathbb{M}(\kappa)$ over $V[G]$. 

Indeed, it is obvious that $\mathbb{M}(\kappa) = j(\mathbb{M}(\kappa)) \restriction \kappa$. Moreover, since for every $p \in \mathbb{M}(\kappa)$, $j(p) = p$, we conclude that for a generic filter $H \subseteq j(\mathbb{M}(\kappa))$, letting $G = H \restriction \kappa$, the embedding $j$ can be extended to an elementary embedding $j^\star \colon M[G] \to N[H]$. 

As in Theorem \ref{theorem: trees with ladders to subcompactness}, by taking an element $\eta \in j(\dot{T})^{H}_{j\image \lambda} \cap j(\dot{L})^H$, we obtain a branch through $\dot{T}^G$, 
\[b = \{ j^{-1}(\eta \restriction j\image x) \mid x \in \left(P_\kappa \lambda\right)^{V[G]}\}.\]
We would like to show that $b$ belongs to $V[G]$ and that it meets $\dot{L}^G$ on a club. 

The forcing $j(\mathbb{M}(\kappa)) / G$ cannot add new branches to a $P_\kappa \lambda$ trees (see, for example, \cite{Weiss}, or Claim \ref{claim: no new branch} ahead). Thus, $b\in V[G]$. Moreover, in $N[H] \subseteq V[H]$, there is a club in $P_{\omega_1}\lambda$ in which $b$ intersects $L$, since $\cf \kappa = \omega_1$ in the generic extension. We would like to claim that the same holds in $V[G]$. Assume otherwise and let us consider 
\[S = \{x \in P_{\omega_1} \lambda \mid b \restriction x \notin L\} \in V[G].\]
In $N[H] \subseteq V[H]$, $S$ is non-stationary. But the forcing $j(\mathbb{M}(\kappa)) / G$ is proper in $V[G]$ since it is a projection of a product of a $\sigma$-closed forcing and a c.c.c.\ forcing.
\end{proof}

In order to prove the Theorem \ref{thm: catching unbounded but not club}, we will modify Mitchell forcing in order to introduce at each inaccessible level a counterexample for the stronger property of ladder system catching at clubs, while still preserving the tree property. 

Recall that given an ordinal $\alpha \leq \omega_2$, and a binary tree $T \subseteq 2^{\leq \alpha}$. Let $\mathcal{T}$ be the $P_{\omega_2} \alpha$-tree defined by $\mathcal{T}_x = \{r \restriction x \mid r \in T_{\sup x}\}$.  

We will say that $L$ is a ladder system on an $\alpha$-tree $T$ if is a ladder system of the corresponding $P_{\omega_2}\alpha$-tree $\mathcal{T}$. 

\begin{definition}
Let $\alpha$ be a regular cardinal. We define a forcing notion $\mathbb{S}(\alpha)$ that introduces an $\alpha$-tree  $T$ with a ladder system $L$ and branches $\{b_t \mid t \in T\}$ by initial segments, as follows. 

A condition $p \in \mathbb{S}(\alpha)$ is of tuple $p = \langle t, \ell, b, f\rangle$ where:
\begin{enumerate}
\item $t \subseteq {}^{\leq\gamma}2$ is a normal binary tree of successor height, $\gamma + 1 < \alpha$.
\item $\ell$ is a function with a domain which is a closed subset of $\gamma + 1$, and for every $\xi \in \dom \ell$ of uncountable cofinality, there is a member $x \in t_\xi$ and a club $E_x$ at $P_{\omega_1}\xi$, such that $\ell(\xi) = \{x\}\cup \{x \restriction z \mid z \in E_x\}$.
\item $b$ is a function from $t$ to $t_\gamma$ such that $x \leq_t b(x)$ for all $x\in t$.
\item\label{item: definition of f}
$f$ is a set of pairs of the form $(z, r)$ where $z \in P_{\omega_1} \gamma$ and $r \in t_{\sup z} \cup \{-1\}$.  If $(z, r), (z', r') \in f$ are distinct then $\sup z \neq \sup z'$. The set 
\[\{\sup z \mid \exists r\neq -1,\, (z,r) \in f\}\]
is nowhere stationary. 
\item For every $\beta \in \dom \ell$, $\range f \cap \ell(\beta) = \emptyset$.  
\end{enumerate} 
We order $\mathbb{S}(\alpha)$ by $p = \langle t_p, \ell_p, b_p, f_p\rangle \leq q = \langle t_q, \ell_q, b_q, f_q\rangle$ if $t_p$ end extends $t_q$, $\ell_p$ end extends $\ell_q$ above the height of $t_p$ and for every $x\in \dom b_q$, $b_q(x) \leq_{t_p} b_p(x)$ and $f_p$ end extends $f_q$.
\end{definition}

The case $r = -1$ in item (\ref{item: definition of f}) is just a place holder for cases in which we want the ordinal $\sup x$ to be outside of the domain of the generic function. In this case, we abuse notation and declare the domain on $f$ at $\sup x$ to be empty.

Let us introduce the following notions which would be useful through the rest of the proof.
\begin{notation}\label{notation: generic tree}
If $S \subseteq \mathbb{S}(\alpha)$ is a generic filter, then:
\begin{itemize}
\item $T_\alpha = \bigcup \{t \mid \exists \langle t, \ell, b, f\rangle \in S\}$ is a binary $\alpha$-tree, 
\item For each $x \in T_\alpha$, let $B_\alpha(x) = \bigcup \{b(x) \mid \exists \langle t, \ell, b, f\rangle \in S,\, x \in \dom b\} \in {}^\alpha 2$ is a cofinal branch at $T_\alpha$.  
\item $L_\alpha = \bigcup \{\ell \mid \exists \langle t, \ell, b, f\rangle \in S\}$ is a ladder system on $T_\alpha$. 
\item $F_\alpha = \bigcup \{f \mid \exists \langle t, \ell, b, f\rangle \in S\}$. 
\end{itemize}
When $\alpha$ is clear from the context, we will omit it.

\end{notation}
The role of $F_\alpha$ is to kill potential branches that meet $L_\alpha$ on a club. Note that the set $\{\sup x \mid x \in \dom F_\alpha\}$ is non reflecting stationary subset of $\omega_2$. The $b$-components insure that the tree $T_\alpha$ would have many branches in the generic extension (otherwise, the plain tree property would fail). The existence of many branches given by $b$ is crucial in the proof of the strategic closure of the forcing. 

We refer the reader to \cite[Definitions 5.8, 5.15]{CummingsHandbook}, for the definition of $\sigma$-closed and $\alpha$-strategically closed forcings.
\begin{claim}
$\mathbb{S}(\alpha)$ is $\sigma$-closed, $\alpha$-strategically closed and of size $2^{<\alpha}$. 
\end{claim}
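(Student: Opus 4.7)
The plan is to verify the three properties separately, with $\alpha$-strategic closure being the substantive case while the first two are essentially routine.

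For the cardinality bound, I would observe that a condition $p=\langle t,\ell,b,f\rangle$ has $t\subseteq{}^{\leq\gamma}2$ for some $\gamma<\alpha$, so $|t|\leq 2^{<\alpha}$; the ladder $\ell$ is a function from a subset of $\gamma+1$ into subsets of $t$, $b$ is a function from $t$ to its top level, and $f\subseteq P_{\omega_1}\gamma\times(t\cup\{-1\})$. Each component has at most $2^{<\alpha}$ choices, giving $|\mathbb{S}(\alpha)|\leq 2^{<\alpha}$.

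For $\sigma$-closure, given a decreasing sequence $\langle p_n:n<\omega\rangle$ with tree heights $\gamma_n+1$, I would set $\gamma=\sup_n\gamma_n$ (of countable cofinality) and build a lower bound $p$ by unioning the components and adding a top level at $\gamma$: for each $x\in\bigcup_n t_{p_n}$, the cofinal limit of $\langle b_{p_n}(x):n\text{ large}\rangle$ is placed in $t_{p,\gamma}$ and $b_p(x)$ is set to this limit; take $\ell_p=\bigcup_n\ell_{p_n}$ and $f_p=\bigcup_n f_{p_n}$. The key point is that $\gamma$ has countable cofinality, so clause (2) imposes no new constraint at $\gamma$, and the remaining items are inherited directly from the $p_n$.

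For $\alpha$-strategic closure, the plan is to exhibit a winning strategy for II in the standard alternating game of decreasing conditions of length $<\alpha$. Throughout the play, II maintains a coherent ``spine'' $\bar x\in{}^{<\alpha}2$ such that $\bar x\restriction\beta$ is a distinguished node of $t_{p_\beta,\beta}$ at every stage $\beta$ at which II has moved. At II's successor moves, II extends the current condition by one level, placing $\bar x\restriction(\beta+1)$ as a successor of $\bar x\restriction\beta$ chosen to avoid the projection of $f$ onto the spine level. At a limit stage $\gamma$ of countable cofinality, II uses the $\sigma$-closure construction, with the natural limit of the spine as the new top-level node. At a limit stage $\gamma$ of uncountable cofinality, II takes the union $\bigcup_{\beta<\gamma}p_\beta$, places $\bar x\restriction\gamma$ at level $\gamma$, and defines $\ell_p(\gamma):=\{\bar x\restriction\gamma\}\cup\{\bar x\restriction\eta:\eta\in E_\gamma\}$ for a fixed club $E_\gamma\subseteq P_{\omega_1}\gamma$ chosen to avoid the nowhere-stationary set $\{\sup y:(y,r)\in f_p,\, r\neq-1\}$.

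The hard part will be verifying that the uncountable-cofinality limit step actually yields a legal condition. Item (2) holds by construction, with witness $\bar x\restriction\gamma$ and club $E_\gamma$. Item (5), $f_p\cap\ell_p(\gamma)=\emptyset$, is precisely what the avoidance choice of $E_\gamma$ achieves, together with the fact that the spine itself was built to avoid $f$ at each stage. The coherence of $\bar x$ ensures $\bar x\restriction\gamma$ is a genuine new node at level $\gamma$ extending all earlier spine values, and the previously committed ladder values $\ell_{p_\beta}(\xi)$ for $\xi<\gamma$ of uncountable cofinality remain valid since the strategy only augments the ladder at $\gamma$ itself.
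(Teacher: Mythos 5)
Your overall architecture is close to the paper's, but the one step that carries the actual content of the proof --- the limit stage of uncountable cofinality in the strategic-closure argument --- is where your write-up begs the question. You choose ``a fixed club $E_\gamma\subseteq P_{\omega_1}\gamma$ \dots to avoid the nowhere-stationary set $\{\sup y:(y,r)\in f_p,\ r\neq -1\}$.'' But at that stage $f_p=\bigcup_{\beta<\gamma}f_{p_\beta}$ is a union of $\gamma$-many sets, each of which is nowhere stationary only up to its own bounded height; such a union can perfectly well be stationary in $\gamma$, in which case no such club $E_\gamma$ exists and, worse, $f_p$ itself violates clause (4) of the definition, so the union is not even a condition. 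Non-stationarity of $\{\sup y:(y,r)\in f_p,\ r\neq -1\}$ in $\gamma$ is exactly what the good player's strategy has to \emph{secure}, not something she may assume. Your successor-step instruction (``choose the spine node to avoid the projection of $f$'') concerns only the tree part and does not control $\dom f$ at all.

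The missing ingredient, which is the heart of the paper's argument, is that at each of her moves the good player also commits the $f$-part at the current top height $\delta$ to be empty, i.e.\ she places the placeholder pair $(x,-1)$ with $\sup x=\delta$ into $f$; by the distinct-sups requirement and end-extension this permanently excludes $\delta$ from $\{\sup y:(y,r)\in f,\ r\neq -1\}$. The heights of the conditions at the limit points of the play then form a club in $\gamma$ disjoint from that set, which simultaneously (i) witnesses that $f_p$ still satisfies the nowhere-stationarity clause at $\gamma$, and (ii) induces the club $E_\gamma\subseteq P_{\omega_1}\gamma$ needed for $\ell_p(\gamma)$, giving clause (5). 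With this repair your spine bookkeeping is fine, though more than is needed: the paper simply takes an arbitrary node of the new top level, which is nonempty because the branches $b(x)$ have limits there. One further small point: in the $\sigma$-closure step, $\bigcup_n\ell_{p_n}$ may fail to have closed domain in $\gamma+1$ when the domains are unbounded in $\gamma$; you should add $\gamma$ to the domain with an arbitrary (or empty) value, which is harmless since $\cf\gamma=\omega$.
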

\begin{proof}
Let $\langle p_\xi\mid \xi< \epsilon\rangle$ be the game played so far, $\epsilon < \alpha$. We denote by $p_\xi = \langle t_\xi, \ell_\xi, b_\xi, f_\xi\rangle$, and we let $\delta_\xi = \max \dom \ell_\xi$ and $\gamma_\xi + 1$ be the height of the tree $t_\xi$. 

At successor steps, player Even does not move. At limit steps $\epsilon$, let us define $t_\epsilon$. If player Odd did not move co-boundedly below $\epsilon$, then player Even does need to do anything.

Otherwise, the conditions $p_\xi$ are strictly decreasing on an unbounded subset of $\epsilon$. Let us construct the condition which player Even would play. First, let us define $t_\epsilon$. This is a tree of height $(\sup_{\xi < \epsilon} \gamma_\xi) + 1$. Let $\tilde{t} = \bigcup_{\xi < \epsilon} t_\xi$. For each $x \in \tilde{t}$, let $B(x)$ be $\bigcup_{\xi_\star < \xi < \epsilon} b_\xi(x)$, where $\xi_\star$ is the level of $x$. We define \[t_{\epsilon} = \tilde{t} \cup \{B(x) \mid x \in \tilde{t}\}.\]
We let  
$b_\epsilon(x) = B(x)$ 
for $x \in \tilde{t}$ and $B(x) = x$ for nodes $x$ in the top level of $t_{\epsilon}$.

Let $\tilde{\ell} = \bigcup_{\xi < \epsilon} \ell_\xi$. 
If $\sup \delta_\xi < \sup \gamma_\xi = \gamma_\epsilon$, we let $\ell_\epsilon = \tilde{\ell}$. 
Otherwise, we need to define $\ell(\gamma_\epsilon)$. 
For $\cf \epsilon = \omega$, we can define $\ell(\gamma_\epsilon) = \emptyset$, and $f_\epsilon = \bigcup_{\xi < \epsilon} f_\xi \cup \{(x, -1)\}$ for some $x$ with $\sup x = \gamma_\epsilon$. 
If $\cf \epsilon > \omega$, we pick an arbitrary $x \in t_{\gamma_\epsilon}$, and let 
\[E_x = \{y \in P_{\omega_1} \gamma_\epsilon \mid \sup y \in \{\gamma_\xi \mid \xi < \epsilon\} \}.\]
We set $\ell(\gamma_\epsilon) = \{x\} \cup \{x \restriction z \mid z \in E_x\}$.

We need to verify that the definition works. Note that the only non-trivial requirement is the empty intersection of $\range f$ and $\ell(\beta)$ for all $\beta \in \dom f$. The requirement holds automatically for all $\beta \notin \{\gamma_\xi \mid \xi \leq \epsilon\}$. For $\beta = \gamma_\xi$, if $y \in \ell(\beta)$ then $\sup \dom y \in \{\gamma_\xi \mid \xi \leq \epsilon\}$, but for each such $y$, if $y \in \dom f$, then $f(y) = -1$. 

Since the strategy is trivial at finite steps, the forcing is $\sigma$-closed.
%
\end{proof}
\begin{claim}\label{claim:S is working}
Let $\alpha$ be a regular cardinal, $\alpha \geq \omega_2$. In the generic extension by $\mathbb{S}(\alpha)$ there is no branch of the generic tree $T_\alpha$ that meets the generic ladder system $L_\alpha$ on an $\omega_1$-club.
\end{claim}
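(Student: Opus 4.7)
The plan is a density argument by contradiction, exploiting the disjointness between the $f$ and $\ell$ components required by condition~(5) of the definition, together with reflection through a countable elementary submodel. Suppose, toward contradiction, that some $p_0 \in \mathbb{S}(\alpha)$ forces that $\dot b$ is a cofinal branch of $\dot T$ meeting $\dot L$ on a $P_{\omega_1}\alpha$-club $\dot C$. The goal is to produce an extension $q' \leq p_0$ that forces the opposite.

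Pick a countable $M \prec H(\theta)$ containing $p_0,\dot b,\dot C,\mathbb{S}(\alpha)$ and set $x_0 = M \cap \alpha$. Using the $\sigma$-closure of $\mathbb{S}(\alpha)$ established in the previous claim, build a descending $M$-generic sequence below $p_0$ and let $q$ be a lower bound. Standard arguments then give: $q \Vdash \dot b \restriction x_0 = r_0$ for some fixed $r_0 \in V$, since the countably many dense sets deciding the coordinates $\dot b(\xi)$ for $\xi \in x_0$ all lie in $M$; and $q \Vdash x_0 \in \dot C$, by interleaving dense sets that force an increasing chain of elements of $\dot C \cap M$ with union $x_0$ (using elementarity and the club property of $\dot C$). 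By further density, also arrange that $r_0 \notin \ell_q(\beta)$ for every $\beta \in \dom \ell_q$; the simplest way is to ensure $\sup x_0 \notin \dom \ell_q$ and to place $r_0$ at level $\sup x_0$ outside any ladder data below.

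Now define $q' \leq q$ by $f_{q'} := f_q \cup \{(x_0,r_0)\}$ and leave the other components of $q$ unchanged. Adding the single countable-cofinality ordinal $\sup x_0$ preserves the nowhere-stationary condition on the support of $f$, and the previous arrangement preserves condition~(5), so $q' \in \mathbb{S}(\alpha)$. Then $q' \leq p_0$ forces $(x_0,r_0) \in \dot F_\alpha$, and, combining $x_0 \in \dot C$ with the hypothesis on $\dot b$, it also forces $r_0 = \dot b \restriction x_0 \in \dot L_\alpha$. However, every condition in the generic filter below $q'$ retains $(x_0,r_0) \in f$, so by condition~(5) it can never place $r_0$ into any $\ell(\beta)$; unpacking $\dot L_\alpha = \bigcup \ell$ yields $r_0 \notin \dot L_\alpha$, the desired contradiction.

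The hard part is the bookkeeping in the construction of $q$: one must interleave the dense sets deciding coordinates of $\dot b$ on $x_0$, the dense sets placing an increasing chain in $\dot C \cap M$ cofinal in $x_0$, and the dense sets that keep $r_0$ out of every $\ell_q(\beta)$. All of these lie in $M$, so $\sigma$-closure suffices; the subtlety is only in laying out the interleaving. A secondary point is that $\sup x_0$ has countable cofinality, so the structural constraint of condition~(2) on $\ell(\sup x_0)$ does not apply, but the global disjointness constraint of condition~(5)---the only one used in the final contradiction---still does.
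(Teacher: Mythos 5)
Your proposal is correct and follows essentially the same route as the paper: an $M$-generic condition over a countable $M\prec H(\theta)$ decides $\dot b\restriction(M\cap\alpha)$ and forces $M\cap\alpha\in\dot C$, and the freedom at the limit level lets you seal that decided value into the $f$-component, which condition (5) then permanently excludes from $L$. The only cosmetic issue is that the second coordinate of the new $f$-pair should formally be a node of $t_{\sup x_0}$ restricting to $r_0$ on $x_0$ rather than $r_0$ itself, an imprecision the paper's own proof shares.
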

\begin{proof}
Let $\dot{b}$ be a name for some a branch and let $\dot{C}$ be a name for a club. Let $p$ be a condition in $\mathbb{S}(\alpha)$. We want to find a condition $q \leq p$ such that $q \Vdash \check{a} \in \dot{C}$, $\dot{b} \restriction a = \check x$ and $\check x \notin \dot{L}$. 

Work inside some countable model $M$ such that $p, \dot{b}, \dot{C}, \mathbb{S}(\alpha) \in M$, and let $\delta = \sup(M \cap \alpha)$. By taking an $\omega$-sequence of extensions of $p$ inside $M$, we obtain an $M$-generic filter $G$. By the $\sigma$-closure of the forcing, there are many conditions $q$ such that $G = \{q' \in M \mid q' \geq q\}$. Any such condition is a lower bound for the filter $G$.  

Since $G$ is $M$-generic, for every $\zeta \in M$, the value of $\dot{b}(\zeta)$ is determined by some condition in $G$. Therefore, there some $x \colon M \cap \alpha \to 2$ such that $(\dot{b} \cap M)^G = x$. Note that for each condition $q$ as above, $q \Vdash \dot{b} \restriction (M \cap \alpha) = \check{x}$, and in particular for some $y$, $q \Vdash \check y \in \dot{T}_{\alpha}$ and $y \restriction (M \cap \alpha) = x$.

Since $\cf \delta = \omega$, we can pick $q = (t^q, \ell^q, b^q, f^q)$ to be a lower bound of the conditions in $G$, such that $f^q(M \cap \delta) = x$, and $\ell^q(\delta) = \emptyset$. This is possible, since the height of $t^q$ is at least $\delta + 1$, $y \in t^q_{\delta + 1}$ and $x = y \restriction (M \cap \delta)$. 

Since $q$ is $M$-generic and $\dot{C}\in M$ is forced to a club, $q \Vdash M \cap \alpha \in \dot{C}$. 
Finally, $q \Vdash \dot{b} \restriction (\check M \cap \check \alpha) = \check {x} \notin \dot{L}$.
\end{proof}

\begin{definition}
Work in the generic extension by $\mathbb{S}(\alpha)$. Let $\dot{\mathbb{T}}(\alpha)$ be the $\mathbb{S}(\alpha)$-name for the forcing that adds a club disjoint from the set $\{\sup x \mid x \in \dom F\}$, using bounded initial segments.
\end{definition}
The following observation is standard:
\begin{claim}
$\mathbb{S}(\alpha) \ast \dot{\mathbb{T}}(\alpha)$ contains an $\alpha$-closed dense subset.
\end{claim}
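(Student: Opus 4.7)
The plan is to identify an $\alpha$-closed dense subset $\mathbb{D}\subseteq \mathbb{S}(\alpha)\ast\mathbb{T}(\alpha)$ whose second coordinate has been ``read off'' into the ground model and synchronized with the tree height. Specifically, I would declare $(p,\check c)\in\mathbb{D}$ iff $p=\langle t,\ell,b,f\rangle\in\mathbb{S}(\alpha)$ has top level $\gamma_p$, $c\in V$ is a closed bounded subset of $\alpha$ with $\max c=\gamma_p$, and
\[
c\cap\{\sup x\mid (x,r)\in f,\ r\neq -1\}=\emptyset.
\]
Since end extension in the $f$-coordinate only permits new pairs whose $\sup x$ lies strictly above $\gamma_p$, any $q\le p$ in $\mathbb{S}(\alpha)$ preserves this disjointness at all heights $\le\gamma_p=\max c$, so $p\Vdash \check c\in\dot{\mathbb T}(\alpha)$ and $\mathbb D$ really sits inside the iteration. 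For density, given $(q,\dot d)$, strengthen $q$ to decide $\dot d$ as some $d\in V$, extend further to $q''$ with $\gamma_{q''}\ge\max d$ while leaving the new top level free of any nontrivial $f$-pair, and set $c=d\cup\{\gamma_{q''}\}$.

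The content of the claim is $\alpha$-closure of $\mathbb D$. Given a descending sequence $\langle(p_i,\check c_i)\mid i<\beta\rangle$ with $\beta<\alpha$, set $\gamma=\sup_i\gamma_{p_i}$. Since the $c_i$'s end-extend one another, $c=\bigcup_i c_i\cup\{\gamma\}$ is closed bounded in $\alpha$ with $\max c=\gamma$ and with $c\cap\gamma$ cofinal in $\gamma$. I would build a lower bound $p$ of top level $\gamma$ coherently: take $t$ as the union of the $t_{p_i}$ capped by a new level $t_\gamma$ consisting of the limits of the coherent branches $\{b_{p_i}(x)\}$; extend $b$ by continuity; and put $f=\bigcup_i f_{p_i}$. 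Crucially, no pair is added at height $\gamma$, so nowhere-stationarity of the $\sup$-projection of $\dom f$ is preserved and $c$ stays disjoint from the projection of the nontrivial pairs of $f$.

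The delicate step is defining $\ell(\gamma)$, required as soon as $\dom\bigcup_i\ell_{p_i}$ accumulates at $\gamma$. Fix any $x\in t_\gamma$ and set
\[
E_x=\{y\in P_{\omega_1}\gamma\mid \sup y\in c\}.
\]
This is a club in $P_{\omega_1}\gamma$: given $y_0$, pick $\xi\in c\cap\gamma$ with $\xi>\sup y_0$ and observe that $y_0\cup\{\xi\}$ has maximum $\xi\in c$, witnessing cofinality; closure under countable increasing unions is immediate from closedness of $c$ in $\alpha$. Setting $\ell(\gamma)=\{x\}\cup\{x\restriction\sup y\mid y\in E_x\}$, every node in $\ell(\gamma)$ lives at a level in $c$, which by hypothesis is disjoint from the $\sup$-projection of all nontrivial pairs of $f$. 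Hence $f\cap\ell(\gamma)=\emptyset$, as required, and $(p,\check c)$ is a lower bound in $\mathbb D$.

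The main obstacle is precisely this last step, and it explains why the auxiliary forcing $\mathbb T(\alpha)$ is bundled in at all. With $\mathbb{S}(\alpha)$ alone, the strategic-closure argument of the preceding claim yields only $\sigma$-closure, because at longer limits there is no canonical club along which to restrict a new ladder entry away from the accumulating nontrivial $f$-supports. Passing to $\mathbb{S}(\alpha)\ast\mathbb{T}(\alpha)$ and carrying a ground-model club $c$ alongside $p$ supplies exactly such a guide, while the compatibility clause in the definition of $\mathbb D$ keeps it perpetually aligned with $f$ throughout the limit construction.
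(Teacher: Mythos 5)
The paper gives no proof of this claim (it is labelled ``standard''), and your argument is exactly the standard one that fills it in: pass to the dense set of pairs $(p,\check c)$ with $c$ a ground-model closed set synchronized with the height of $t_p$ and disjoint from the $\sup$-projection of $f_p$, and at limits use $c\cap\gamma$ as the club guiding the new ladder entry $\ell(\gamma)$ away from $f$. Your argument is correct, and your closing diagnosis of why $\mathbb{T}(\alpha)$ must be bundled in is the right one. One small point of phrasing: at a limit $\gamma$ of uncountable cofinality, the nowhere-stationarity of the $\sup$-projection of $f=\bigcup_i f_{p_i}$ at $\gamma$ is not a consequence of merely adding no pair at height $\gamma$; it is witnessed by the fact that $c\cap\gamma$ is a club in $\gamma$ disjoint from that projection --- an ingredient you already have in hand, so this is only a matter of stating the implication explicitly. (Implicit throughout, and worth noting, is that end-extension in the $f$-coordinate is read as agreement below the old top level, which is the intended reading and the one consistent with the paper's later use of the forcing.)
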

\begin{proof}
Let $D$ be the set of all conditions $\langle (t,\ell,b,f), q\rangle$ such that the height of $t$ is $\gamma + 1$, $\max \dom \ell = \max q = \gamma$. The set $D$ is dense and $\alpha$-closed.
\end{proof}

Next, we would like to define a variant of Mitchell's forcing, $\mathbb{M}'(\kappa)$. We define it by induction on $\rho\leq \kappa$. We verify throughout the inductive definition that there are natural projections from $\mathbb{M}'(\zeta)$ to $\mathbb{M}'(\rho)$ for $\rho < \zeta$, given by taking the restrictions of all the components in the condition. 

A condition in $\mathbb{M}'(\rho)$ is of the form $\langle a, m, s, t\rangle$ where:
\begin{enumerate}
\item $a \in \Add(\omega,\rho)$.
\item $m$ is a function with countable support (contained in $\rho$), such that for all $\alpha \in \supp m$, $\Vdash_{\Add(\omega,\alpha)} m(\alpha) \in \Add(\omega_1, 1)$. 
\item $s$ is a partial function with Easton support contained in the inaccessible cardinals $<\rho$, and for every $\alpha \in \dom s$, $\Vdash_{\mathbb{M}'(\alpha)} s(\alpha) \in \mathbb{S}(\alpha)$. 
\item $t$ is a partial function with Easton support contained in the inaccessible cardinals $<\rho$, and for every $\alpha \in \dom t$, $\Vdash_{\mathbb{M}'(\alpha) \ast \mathbb{S}(\alpha)} t(\alpha) \in \mathbb{T}(\alpha)$.
\end{enumerate}

We order the forcing naturally: $(a, m, s, t) \leq (a', m', s', t')$ iff $a \leq a'$, for all $\alpha < \rho$, $a' \restriction \alpha \Vdash m(\alpha) \leq m'(\alpha)$, $(a \restriction \alpha, m \restriction \alpha, s \restriction \alpha, t \restriction \alpha) \Vdash (s(\alpha), t(\alpha)) \leq (s'(\alpha), t'(\alpha))$.

We will force with $\mathbb{M}'(\kappa)\ast \mathbb{S}(\kappa)$, so the forcing at $\kappa$ behaves differently than the forcing at lower inaccessible cardinals: for each inaccessible $\alpha < \kappa$ we force with $\mathbb{S}(\alpha) \ast \dot{\mathbb{T}}(\alpha)$ while for $\kappa$ itself we just force with $\mathbb{S}(\kappa)$, without $\dot{\mathbb{T}}(\kappa)$. This strategy traces back to Kunen's proof \cite{KunenIdeals}, and appears in countless works where different compactness and anti-compactness principles are compared.

\begin{lemma}\label{lemma: catching unbounded but not clubs}
Let $\kappa$ be $\lambda$-$\Pi^1_1$-subcompact. Then in the generic extension by $\mathbb{M}'(\kappa) \ast \dot{\mathbb{S}}(\kappa)$, $\LSCP(P_{\omega_2}\lambda, \omega_1\text{-cofinal})$ holds but $\neg \LSCP(P_{\omega_2}\omega_2, \omega_1\text{-club})$. 
\end{lemma}
\begin{proof}
Since our forcing notion is of the form $\mathbb{M}'(\kappa)\ast \dot{\mathbb{S}(\kappa)}$, by Claim \ref{claim:S is working}, the generic tree and ladder system which is introduced by $\mathbb{S}(\kappa)$ would witness the failure of $\LSCP(P_{\omega_2}\omega_2, \omega_1\text{-club})$.

Let us turn now to showing that $\LSCP(P_{\omega_2}\lambda, \omega_1\text{-cofinal})$ holds. Let $\dot{\mathcal T}$ be a name for a $P_{\omega_2}\lambda$-tree and let $\dot{\mathcal L}$ be a name for a ladder system on $\dot{\mathcal T}$.

As in the proof of the previous case, we start with a transitive model $M$, which contains all relevant information and obtain from Lemma \ref{lemma: subcompact using elementary embedding} a transitive model $N$ and an elementary embedding, $j\colon M \to N$ with $j\image \lambda \in M$, $\lambda < j(\kappa)$. This time, we would like to require more closure from $N$, so we will assume that $M$ satisfies the further requirements of Lemma \ref{lemma:better closure for N}, and conclude that we can pick $N$ to be closed under $<\lambda$-sequences.

Let $G \ast S\subseteq \mathbb{M}'(\kappa) \ast \mathbb{S}(\kappa)$ be a generic filter. We would like to find a generic $G' \ast S' \subseteq j(\mathbb{M}'(\kappa) \ast \mathbb{S}(\kappa))$ and lift the embedding to en embedding $\tilde{j} \colon M[G][S] \to N[G'][S']$. 

Since the $f$-part which is introduced in the forcing $\mathbb{S}(\kappa)$ is a non-reflecting stationary set, there is no hope to lift this embedding without a forcing component that would add a club disjoint from it. So, $\tilde{j}$ exists only in a generic extension of $V[G][S]$.

First, let us show that
\[j(\mathbb{M}'(\kappa)) \cong \mathbb{M}'(\kappa) \ast \dot{\mathbb{S}}(\kappa) \ast \dot{\mathbb{T}}(\kappa) \ast \dot{\mathbb{Q}}.\]
Indeed, the map sending $(a, m, s, t) \in j(\mathbb{M}'(\kappa))$ to $(a \restriction \kappa, m \restriction \kappa, s \restriction \kappa + 1, t \restriction \kappa + 1)$ is a projection. Since $V_\kappa \subseteq N$, it is easy to verify that $\dot{\mathbb{S}}(\kappa), \dot{\mathbb{T}}(\kappa)$ and the forcing $\mathbb{M}'(\kappa)$ are computed in the same way in $M$ and in $N$, and therefore this map projects $j(\mathbb{M}'(\kappa))$ onto $\mathbb{M}'(\kappa) \ast \dot{\mathbb{S}}(\kappa) \ast \dot{\mathbb{T}}(\kappa)$. 
Let $T \subseteq \mathbb{T}^{G \ast S}(\kappa)$ be a $V[G][S]$-generic filter,
and let $\mathbb{Q}$ be the quotient forcing:
\[\mathbb{Q} := j(\mathbb{M}'(\kappa)) /\left(\mathbb{M}'(\kappa) \ast \dot{\mathbb{S}}(\kappa) \ast \dot{\mathbb{T}}(\kappa)\right) = j(\mathbb{M}'(\kappa)) / (G \ast S \ast T).\]






Let $C = \bigcup T$ be the generic club introduced by $\mathbb{T}(\kappa)$. In order to lift $j$, we must find a generic filter $G' \subseteq j(\mathbb{M}'(\kappa))$ and $S' \subseteq j(\mathbb{S}(\kappa))$ such that for every $p \in G \ast S$, $j(p) \in G' \ast S'$.
By the structure of the conditions in $\mathbb{M}'(\kappa)$, this implies that $G' \restriction \kappa = G$, and for every $s \in S$, $j(s)$ is in the generic $S'$ for $j(\mathbb{S}(\kappa))$. As usual, we choose $G'$ such that $G' \restriction \kappa + 1 = G \ast S \ast T$ and $G' / (G \ast S \ast T)$ is a generic filter for $\dot{\mathbb{Q}}^{G' \restriction \kappa + 1}$ over $M[G' \restriction \kappa + 1]$. 

We would like to find a master condition---a condition in $j(\mathbb{S}(\kappa))$, $m$ such that for all condition $s \in \mathbb{S}(\kappa)$ that appear in the generic filter $S$, $m \leq j(s)$. This would be sufficient as all conditions in the generic filter $G$ are unmoved by $j$. 

Let $T_\kappa, B_\kappa, L_\kappa, F_\kappa$ be the generic tree, branches, ladder system and function introduced by $\mathbb{S}(\kappa)$, respectively, as defined in Notation \ref{notation: generic tree} (do not confuse the generic tree $T_\kappa$ with the generic filter for the forcing $\mathbb{T}(\kappa)$, $T$).

Take $t_m = T_\kappa \cup \range B_\kappa \in 2^{\leq \kappa}$. So,  $t_m$ is a tree of height $\kappa + 1$. Let $\ell_m$ extend the generic ladder system $L_\kappa$ by adding one element in the level $\kappa$. Since $\kappa$ is forced to have uncountable cofinality in the generic extension by $j(\mathbb{M}'(\kappa))$, $\ell_m(\kappa)$ is obtained by picking one arbitrary element $\eta$ from the $\kappa$-th level of the tree and using the generic club $C$ that was introduced by $\mathbb{T}(\kappa)$: the club $E_\eta$ consists of all $x \in P_{\omega_1} \kappa$ such that $\sup x \in C$. 

Let $b_m = B_\kappa$, the collection of all generic branches. More precisely, for every $x \in T_\kappa$, we define $b_m(x)$ to be the node in $t_m$ which lie on top of the cofinal branch $B_\kappa(x)$, and $b_m(x) = x$ for $x \in t_m \cap {}^\kappa 2$. Let $f_m = F_\kappa$. The generic club $C$ witnesses the domain of $F$ to be non-stationary. Moreover, since $C$ does not intersect $\{\sup x \mid x \in \dom F\}$, we conclude that $\ell_m(\kappa)$ is disjoint from $F$.

Finally, we take a generic $S'$ such that $m\in S'$. By the above discussion, in $V[G'][S']$, the embedding $j$ lifts. Let us denote by $j^\star \colon M[G][S] \to N[G'][S']$ the lifted embedding. 

As in the proof of Theorem \ref{theorem: trees with ladders to subcompactness}, we obtain a branch $b$ by considering the value of the ladder system at $j\image \lambda$: The element $j \image \lambda$ is a member of the club which is included in the domain on $j^*(\mathcal L)$. We take $\eta \in j^*(\mathcal L)(j \image \lambda)$, and define 
\[b = \{j^{-1}(\eta \restriction j\image z) \mid z \in P_{\omega_2} \lambda\}.\]

We claim that $b\in V[G]$.  

\begin{claim}\label{claim: no new branch}
Assume that in $V[G]$, there is no cofinal branch in $\mathcal T$ that meets the ladder system $\mathcal L$ $\omega_1$-cofinally. Then, the forcing $\mathbb{T}(\kappa) \ast \mathbb{Q} \ast j(\mathbb{S}(\kappa))$ does not introduce such a branch.
\end{claim}
\begin{proof}
In order to prove the claim, we are going to find a forcing notion $\hat{\mathbb{Q}}$ and a projection from $\hat{\mathbb{Q}}$ to $\mathbb{Q}$. We will show that $\mathbb{T}(\kappa)\ast \hat{\mathbb{Q}} \ast j(\mathbb{S}(\kappa))$ (that projects to $\mathbb{T}(\kappa)\ast \mathbb{Q} \ast j(\mathbb{S}(\kappa))$) does not introduces new branches to $P_{\omega_2} \lambda$-trees, assuming that there is no branch that meets the ladder system cofinally. 

First, since $j(\mathbb{S}(\kappa))$ is forced to be $j(\kappa)$-strategically closed in the generic extension of $N$, it is forced to be at least $\lambda$-strategically closed in $V$. Thus, if $\dot{b}$ is a name for a new branch through $\mathcal T$ which is forced to meet $\mathcal L$ $\omega_1$-cofinally, then one can construct a filter of $j(\mathbb{S}(\kappa))$ deciding the value of $\dot{b}(\alpha)$ for all $\alpha < \lambda$, such that the obtained branch $b'$ indeed meets $\mathcal L$ cofinally.

Now, take $\hat{\mathbb{Q}}$ to be $\Add(\omega, j(\kappa) \setminus \kappa) \times \mathbb{C}$ where $\mathbb{C}$ is the collection of all conditions of the form $(1, m, s, t) \in \mathbb{Q}$, ordered by their induced order from $\mathbb{Q}$. Note that this is just the termspace forcing for $\mathbb{Q}$, and that $\mathbb{C}$ is $\sigma$-closed. See \cite[Section 22]{CummingsHandbook} for further details about termspaces and projections.

Since the forcing $\Add(\omega, j(\kappa) \setminus \kappa)$ is productively c.c.c., it cannot add branches to a $P_{\omega_2} \lambda$-tree, \cite[Lemma 1.6]{Unger2015}. Thus, any new branch was already introduced by $\mathbb{T}(\kappa)\ast \mathbb{C}$.

Let us assume that there is such a branch. Let $\mathcal{M}$ be a countable elementary substructure of $H(\chi)[G]$ that contains the forcing notions $\mathbb{T}, \mathbb{C}$, the tree and the name for the new branch $\dot{b}$. Let us pick $\mathcal M$ such that $\delta = \sup (\mathcal M \cap \kappa)$ does not belong to the set $S = \{\alpha < \kappa \mid \exists x \in \dom f,\, \sup x= \alpha\}$. There is such a model since the set $S$ is co-stationary on $S^{\omega_2}_\omega$.

Let us construct a prefect tree of mutually $\mathcal M$-generic filters, $\langle K_\eta \mid \eta \in {}^\omega 2\rangle$. Each one of those filters gives rise to a condition $\langle t_\eta, q_\eta\rangle$. For each $\eta$, $t_\eta = \bigcup \{t \mid \langle t, q\rangle \in K_\eta\} \cup \{\delta\} \in \mathbb{T}(\kappa)$ since $\delta \notin S$. For each $\eta$, the condition $q_\eta$ exists by the $\sigma$-closure of $\mathbb{C}$.

Now, for each $\eta \in {}^\omega 2$, there is a different realization of $\dot{b}$ on $\mathcal M$. Note that $\langle t_\eta, q_\eta\rangle$ forces the value of $\dot{b} \cap \mathcal M$ to be some $x_\eta$. By mutual genericity of the filters $K_\eta$, and since $\Vdash \dot{b}\notin V[G]$, for every $\eta \neq \eta'$, $x_{\eta} \neq x_{\eta'}$. But in this model $2^{\aleph_0} = \omega_2$, which contradicts the assumption that each level of the tree has size $<\omega_2$.
\end{proof}
Finally, let us show that the set 
\[B = \{x \in P_{\omega_1} \lambda \mid b \restriction x \in \bigcup \range \ell\}\]
is unbounded. Indeed, this set is even stationary as in $N[G'][S']$ (in which $\cf \kappa > \omega$) this set contains a club. 
\end{proof}
This establishes Theorem \ref{thm: catching unbounded but not club}.

As the different variants of the strong tree property behave differently on $\omega_2$, let us compare them to the Ineffable Tree Property. The model of Lemma \ref{lemma: catching unbounded but not clubs}, assuming full supercompactness, also provides the following separation result.
\begin{remark}
In the model of Theorem \ref{thm: catching unbounded but not club}, $ITP(\omega_2)$ holds. In particular, $ITP(\omega_2)$ is consistent the failure of $\LSCP(P_{\omega_2}\omega_2, \omega_1\text{-club})$.
\end{remark}
\begin{proof}
We work with full supercompact embeddings. Let $j \colon V \to M$ be a $\lambda$-supercompact embedding. As in the proof of Lemma \ref{lemma: catching unbounded but not clubs}, we can lift it to an elementary embedding $j^* \colon V[G] \to M[H]$.  


Let us consider now a $P_{\omega_2} \lambda$-tree $\mathcal{T}$ with a list $d$. Let us consider the branch $b$ which is generated by $j^*(d)(j \image \lambda) \in M[H]$. By the arguments of Lemma \ref{lemma: catching unbounded but not clubs}, this branch appears already in $V[G]$. We need to show that it is ineffable. Working in $V[G]$, let $B = \{x \in P_{\omega_2} \lambda \mid b(x) = d(x)\}$. If $B$ is non-stationary in $V[G]$, then there is a club $D$, avoiding it. Let us consider $j^*(D)$. $j\image \lambda = \bigcup_{x\in D} j^*(x) \in j^*(D)$. Therefore, $j \image \lambda \notin j^*(B)$, but this is absurd, as \[j^*(b)(j\image \lambda) = \bigcup_{x \in P_{\omega_2}\lambda} j^*(b(x)) = j^*(d)(j\image \lambda).\]
\end{proof}
\section{Questions}
We conclude the paper with some questions. 
Our model of Theorem \ref{thm: catching unbounded but not club} gives an unsatisfying separation between the different ladder system principles as the cofinal branch meets the ladder system on a stationary set, and not merely an unbounded set. This seems to be essential in this type of argument.
\begin{question}
Is it consistent that $\LSCP(P_{\omega_2}\lambda, \omega_1\emph{-cofinal})$ holds, but the seemly stronger property $\LSCP(P_{\omega_2}\omega_2, \omega_1\emph{-stationary})$ fails, namely there is an $\omega_2$-tree with a ladder system such that no branch branch meets that ladder system on a stationary set in $P_{\omega_1}\lambda$?
\end{question} 

\begin{question}
Does the $\LSCP(P_{\omega_2}\lambda, \omega_1\text{-clubs})$ imply the Ineffable Tree Property at $P_{\omega_2}\lambda'$ for some $\lambda' < \lambda$? 
\end{question}
\section{Acknowledgments}
We would like to thank the anonymous referee for reading the paper carefully and providing many useful suggestions and corrections as well as referring us to some highly relevant literature. 
\providecommand{\bysame}{\leavevmode\hbox to3em{\hrulefill}\thinspace}
\providecommand{\MR}{\relax\ifhmode\unskip\space\fi MR }
\providecommand{\MRhref}[2]{%
  \href{http://www.ams.org/mathscinet-getitem?mr=#1}{#2}
}
\providecommand{\href}[2]{#2}

\end{document}